\newtheorem{theorem}{Theorem}[section]
\newtheorem{lemma}[theorem]{Lemma}
\newtheorem{remark}[theorem]{Remark}
\theoremstyle{definition}
\newcommand{\Cal}[1]{{\mathcal #1}}
\newcommand{\sign}{\mbox{\rm sign}}
\newcommand{\End}{\mbox{\rm End}}
\newcommand{\C}{\mathbb{C}}
\newcommand{\R}{\mathbb{R}}
\newcommand{\GL}{{\rm GL}}
\newcommand{\M}{{\rm M}}
\newcommand{\Char}{{\rm char}}
\begin{document}
    \title[Algebraic normal subgroups]{Algebraic commutators with respect to subnormal subgroups in division rings}
    \author[M. H. Bien]{Mai Hoang Bien$^{1,2}$}\author[B. X. Hai]{Bui Xuan Hai$^{1,2}$}\author[V. M. Trang]{Vu Mai Trang $^{1,2}$} 
    \thanks{The first and the second authors are  funded by  Vietnam National University HoChiMinh City (VNUHCM) under grant number B2020-18-02}
    \email{mhbien@hcmus.edu.vn;bxhai@hcmus.edu.vn;trangvm8234@gmail.com}
\address{[1] Faculty of Mathematics and Computer Science, University of Science, Ho Chi Minh City, Vietnam.}
\address{[2] Vietnam National University, Ho Chi Minh City, Vietnam.}

\keywords{algebraic, subnormal subgroup, division ring, maximal subfield  \\
	\protect \indent 2020 {\it Mathematics Subject Classification.} 16K20, 16K40, 16R20,	05A05, 05E15}

\maketitle
\begin{abstract}{Let $D$ be a division ring and $K$ a subfield of $D$ which is not necessarily contained in the center $F$ of $D$. In this paper, we study the structure of $D$ under the condition of left algebraicity of certain subsets of $D$ over $K$. Among results, it is proved that if $D^*$ contains a noncentral normal subgroup which is left algebraic over $K$ of bounded degree $d$, then $[D:F]\le d^2$. In case $K=F$, the obtained results show that if either all additive commutators or all multiplicative commutators with respect to a noncentral subnormal subgroup of $D^*$ are algebraic of bounded degree $d$ over $F$, then $[D:F]\le d^2$.}		

\end{abstract}

\section{Introduction}

Let $D$ be a division ring with center $F$. If $a\in D$ is an algebraic element over $F$, then we say that  $d=[F(a):F]$ is the \textit{algebraic degree} of $a$ and $a$ is \textit{algebraic of degree $d$}. A subset $S$ is {\it algebraic of bounded degree} if there exists a positive integer $d$ such that every element of $S$ is algebraic of degree $\le d$.  In 1945, Jacobson proved that if $D$ is algebraic  of bounded degree over $F$, then $D$ is centrally finite; that is, $[D:F]<\infty$ (see \cite[Theorem 7]{Pa_Ja_45}). This fact has been considered as one of  the classical results in the theory of division rings. Further, it was an inspiration for several related topics: 
for the study  concerning left (right) algebraicity in a division ring  over some division subring, see e.g. \cite{Pa_BeRo_14,Pa_BeDrSh_13,Pa_HaTrBi_19, Bo_Ja_64}; for the study of algebraic valued functions, see e.g. \cite{Pa_HePrSc_75}; for the influence of algebraic commutators of bounded degree on the structure of a division ring, see e.g.  \cite{Pa_Bi_16,Pa_DeBiHa_19,Pa_Di_86,Pa_Fa_60,Pa_GoMa_86,Pa_GoSh_08,Pa_HaKhBi_2020,Pa_He_82, Pa_He_80, Pa_MaMa_85} and references therein. After Jacobson's theorem we have mentioned above, it is natural to evaluate the dimension $[D:F]$ for a division ring $D$ with these properties. It follows from \cite[Theorem 6]{Pa_ChFoLe_04} and \cite[Theorem~ 1.3]{Pa_BeDrSh_13} that if $D$ is algebraic of bounded degree $d$ over $F$, then $[D:F]\le d^2$.  
 
Further, instead of the algebraicity of $D$,  several authors have been considering only the algebraicity of the set of commutators (either multiplicative or additive) or the symmetric set of $D$ (see e.g. \cite{Pa_AkArMe_98, Pa_ChFoLe_04, Pa_He_82, Pa_He_80, Pa_HePrSc_75, Pa_Ma_00, Pa_MaAkMeHa_95, susan_algebraic, Pa_Sl_64}). 

The first purpose of this paper is to evaluate $[D:F]$ in case $D^*$ contains a non-central subnormal subgroup $N$ with one of the following properties:

1) $N$ is normal and  algebraic of bounded degree $d$ over $F$;

2) all multiplicative commutators of the form $aba^{-1}b^{-1}$, where $a\in N$ and $b\in D^*$, are algebraic of bounded degree $d$ over $F$;

3) all additive commutators of the form $ac-ca$, where $a\in N$ and $c\in D^*$, are algebraic of bounded degree $d$ over $F$.

It is surprising that in any of these cases, the result we get always confirms that $[D:F]\le d^2$ (see theorems \ref{c4.3}, \ref{th:4.2}, and \ref{th:4.3} in Section 4). These results strongly generalize 
\cite[Theorem 7]{Pa_Ja_45}, \cite[Theorem 6]{Pa_ChFoLe_04}, and \cite[Theorem 2]{Pa_AkArMe_98}. Moreover, Theorem 3.3 assures that this evaluation is best possible. 

Our second purpose is to study the left algebraicity of bounded degree of a non-central subnormal subgroup $N$ of $D^*$ over a subfield $K$ of $D$ which is not necessarily contained in the center $F$ of $D$. For this purpose, in Section 2, we recall the concept of left (right) algebraicity in noncommutative rings. In this section, we recall also some useful properties of generalized Laurent identities in  matrix rings over division rings we need for our further study. Section~ 3  is devoted to the proof of the existence of maximal subfields  in a centrally finite division ring $D$ which are simple extensions over the center $F$ of $D$. Moreover, they are generated by one commutator which may be either multiplicative or additive. The results we get in this section are mainly applicable to deduce the theorems of Section 4 as we have mentioned above. Finally, in Section 5, we study the left algebraicity in a division ring $D$ over its arbitrary subfield $K$. The main result we get in Theorem \ref{left algebraic} is really general. In fact, we have successfully evaluated the dimension $[D:F]$ of $D$ in case $D^*$ contains a non-central normal subgroup $N$ which is left algebraic of bounded degree $d$ over a subfield $K$ of $D$, where $K$ is not necessarily contained in the center $F$ of $D$. More exactly, this evaluation is $[D:F]\le d^2$. Of course, Theorem \ref{c4.3} is a particular case of this result which generalizes \cite[Theorem 1.3]{Pa_BeDrSh_13} as well. 

\section{Preliminaries} 

Let $K$ be a field and $R$ a ring containing $K$ as a subring. We say that an  element $a\in R$ is \textit{left algebraic} over $K$ if there exists a non-zero polynomial $f(t)=a_nt^n+a_{n-1}t^{n-1}+\dots+a_1t+a_0\in K[t]$ such that $a$ is a right root of $f(t)$; that is,  $f(a)=a_na^n+a_{n-1}a^{n-1}+\dots+a_1a+a_0=0$. Additionally, if $f(t)$ is a monic polynomial of  smallest degree $n$ such that $f(a)=~0$, then it is easy to see that $f(t)$ is unique. In this case, we say that $n$ is  the \textit{left algebraic degree}, and $f(t)$ is  the \textit{left minimal polynomial} of $a$. For a positive integer $d$, we say that $a$ is \textit{left algebraic of bounded degree} $d$ over $K$ if $a$ is left algebraic over $K$ of degree not bigger than $d$. The notions of the \textit{right algebraicity}, the \textit{right algebraic degree}, the \textit{right minimal polynomial} and the \textit{right algebraicity of bounded degree} are defined similarly. If $K$ is contained in the center $Z(R)$ of $R$, then every right notion coincides with its corresponding left notion, so in this case, the prefix ``right" or ``left" can be omitted. 

Note that the left (right) minimal polynomial of a left (right) algebraic element over $K$ is unique but it is not necessarily irreducible in general, even if $K$ is assumed to be contained in the center $Z(R)$. For example, let $K$ be a field,  $R=\mathrm{M}_2(K)$ be the matrix ring of degree $2$ over $K$, and  $a=\begin{bmatrix}
1 & 1  \\
0 & 1 \\
\end{bmatrix}$. Then, the minimal polynomial of $a$ is $(t-1)^2$. For an another example, let $\mathbb{H}=\R \oplus \R i\oplus \R j\oplus \R k $ be the division ring of real quaternions. Clearly, $t^2+1$ is the left minimal polynomial of $j$ over $\C$ which is not irreducible over $\C$. However, if $R=D$ is a division ring and $K$ is a subfield of the center $Z(D)$ of $D$, then the minimal polynomial of an algebraic element over $K$ is irreducible (and unique). 

Now, let $F$ be a field, $R$ a ring whose center contains $F$ as a subfield and $X=\left \{ x_{1},x_{2},\dots ,x_{m} \right \}$ a set of $m$ noncommuting indeterminates. Denote by $F\left \langle X \right \rangle$ the group algebra of the free group $\langle X\rangle$ over $F$ and $R_F\left \langle X \right \rangle=R*_FF\left \langle X \right \rangle$ the free product of $R$ and $F\left \langle X \right \rangle$ over $F$. Then,  each element of $R_F\left \langle X \right \rangle$ can be written in the form $f(X)=\sum_{i=1}^{n}f_i(X)$ with some positive integer $n$ and  $$f_i(X)=a_{i1}x_{i_1}^{m_{i_1}}a_{i2}x_{i_2}^{m_{i_2}} \dots a_{it}x_{i_t}^{m_{i_t}}a_{i(t+1)},$$ where $x_{i_j}\in X$, $m_{i_j}\in \{-1,1\}, a_{ij}\in R$. Each element of $R_F\left \langle X \right \rangle$ is called a \textit{generalized Laurent polynomial} in $X$ of $R$ over $F$. 
Let $f\left(X\right)\in R_F\langle X\rangle$. For $m$  elements $c_1,c_2,\dots,c_m$ of the unit group $R^*$ of $R$, $f(c_1, c_2, \dots, c_m)$ is understood to be the value of $f$ when replacing $x_{i}$ by $c_{i}$. Assume that $f$ is non-zero. We say that $f=0$ is a \textit{generalized Laurent identity} of $R$ if $f(c_1,c_2,\dots,c_m)=0$ for all $c_1,c_2,\dots,c_m\in R^*$. Let $\Cal I(R)$ denote the set of all generalized Laurent identities of $R$. The following lemma is a special case of the Amitsur-Bergman Theorem \cite[Theorem 8.2.15]{Bo_Rowen_80} for division rings. 

\begin{lemma}\label{l2.3}
	Let $D$ be a division ring with center $F$. If $[D:F]=n^2$, $K$ is a field containing $F$ and $D\subseteq \M_n(K)$, then $\Cal I(D)\subseteq \Cal I(\M_n(K))$.
\end{lemma}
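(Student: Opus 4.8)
The plan is to deduce the inclusion from two facts: the extension-of-scalars isomorphism $D\otimes_F K\cong\M_n(K)$, and a Zariski-density argument showing that a generalized Laurent polynomial is, entrywise, given by regular functions on a power of $\GL_n$, hence is determined by its values on $D^*$. (The assertion is the special case of the Amitsur--Bergman theorem \cite[Theorem 8.2.15]{Bo_Rowen_80} quoted just above; the argument below makes the reduction explicit.) Throughout I assume $F$ is infinite, as is implicit in this circle of results and is in fact needed here; when $F$ is finite, $D=F$ and $n=1$ by Wedderburn's little theorem.

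First I would pin down the embedding. The inclusion $\iota\colon D\hookrightarrow\M_n(K)$ carries the centre $F$ of $D$ into the scalar matrices $K\cdot I_n$, so it is a homomorphism of $F$-algebras and extends to a homomorphism of $K$-algebras $\tilde\iota\colon D\otimes_F K\to\M_n(K)$, $d\otimes\lambda\mapsto\lambda\,\iota(d)$. Since $D$ is a central simple $F$-algebra and $K$ is an extension field of its centre, $D\otimes_F K$ is a simple $K$-algebra, whence $\tilde\iota$ is injective; as $\dim_K(D\otimes_F K)=[D:F]=n^2=\dim_K\M_n(K)$, it is an isomorphism, and I identify $D$ with its image $\iota(D)$ in $\M_n(K)$. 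Then $D$ spans $\M_n(K)$ over $K$, so an $F$-basis of $D$ is at once a $K$-basis of $\M_n(K)$ and a $\overline{K}$-basis of $\M_n(\overline{K})$; since $F$ is infinite, the $F$-subspace $D$ is therefore Zariski-dense in $\M_n(\overline{K})\cong\mathbb{A}^{n^2}$.

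Next, fix a nonzero $f(X)=\sum_{i=1}^{n}f_i(X)\in\Cal I(D)$, with coefficients in $D\subseteq\M_n(K)$, and regard $\GL_n$ as an affine algebraic group over $\overline{K}$. For $c\in\GL_n$ one has $c^{-1}=\det(c)^{-1}\operatorname{adj}(c)$, so each entry of $f(c_1,\dots,c_m)$ is a polynomial in the entries of the $c_j$ divided by a fixed power of $\prod_j\det(c_j)$, which is invertible on $\GL_n^m$; hence these entries are regular functions on the irreducible variety $\GL_n^m$. Since $\GL_n$ is open and dense in $\M_n$, density of $D$ in $\M_n(\overline{K})$ forces $D^*=D\cap\GL_n$ to be Zariski-dense in $\GL_n(\overline{K})$, so $(D^*)^m$ is dense in $\GL_n(\overline{K})^m$. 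The hypothesis $f\in\Cal I(D)$ says precisely that these regular functions vanish on $(D^*)^m$; vanishing on a dense subset of an irreducible variety, they vanish identically, in particular at every point with entries in $K$. Thus $f(c_1,\dots,c_m)=0$ in $\M_n(K)$ for all $c_1,\dots,c_m\in\M_n(K)^*$. Finally, $f$ remains nonzero in $\M_n(K)_K\langle X\rangle$ because $\M_n(K)_K\langle X\rangle\cong D_F\langle X\rangle\otimes_F K$ and $K$ is free over $F$; therefore $f\in\Cal I(\M_n(K))$, proving $\Cal I(D)\subseteq\Cal I(\M_n(K))$.

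I expect the main obstacle to be the geometric bookkeeping in the last two steps: checking that the entries of a generalized Laurent polynomial genuinely extend to \emph{regular} functions on $\GL_n^m$ (so that ``vanishing on a dense set'' forces ``vanishing everywhere''), and that it is $D^*$, not merely the additive subgroup $D$, that is Zariski-dense in $\GL_n$. Once density of the $F$-subspace $D$ in affine space is in hand --- the one point where infiniteness of $F$ is used --- the passage to the open subset $\GL_n$, and then to $\GL_n^m$, is routine.
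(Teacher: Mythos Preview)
Your argument is correct. The paper itself does not give a proof: it simply records that the lemma is a corollary of \cite[Lemma 8]{Pa_ChFoLe_04} (alternatively \cite[Theorem 6.4]{Pa_Ro_84}), having already noted that the statement is a special case of the Amitsur--Bergman theorem \cite[Theorem 8.2.15]{Bo_Rowen_80}. What you have written is the standard Zariski-density argument underlying those references, made explicit: the identification $D\otimes_F K\cong\M_n(K)$ gives that an $F$-basis of $D$ is a $K$-basis of $\M_n(K)$, hence $D$ is Zariski-dense in $\M_n(\overline K)$ when $F$ is infinite, and the entrywise regularity of a generalized Laurent polynomial on $\GL_n^m$ then forces an identity of $D$ to hold on all of $\GL_n(K)$. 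Your observation that infiniteness of $F$ is needed is accurate and worth flagging (for finite $F$ one has $n=1$ by Wedderburn, and e.g.\ $x^{|F|-1}-1$ vanishes on $F^*$ but not on a larger $K^*$); in every application in the paper $D$ is noncommutative, so this hypothesis is automatically satisfied.
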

\begin{proof} This lemma is just a corollary of \cite[Lemma 8]{Pa_ChFoLe_04} (or see \cite[Theorem 6.4]{Pa_Ro_84}).
\end{proof}

For a natural number $d$, we put
\begin{align}\label{e1}
g_d(x,y_1,y_2,\dots,y_d)=\sum\limits_{\sigma\in S_{d+1}}\sign(\sigma)x^{\sigma(0)}y_1 x^{\sigma(1)}y_2\dots x^{\sigma(d-1)}y_dx^{\sigma(d)},
\end{align}  where $S_{d+1}$ is the symmetric group on the set $\{0,1,\dots,d\}$ and $\sign(\sigma)$ is the sign of $\sigma\in S_{d+1}$. It is clear that $g(x,y_1,\cdots,y_d)$ is a generalized Laurent polynomial. 

\begin{lemma}\label{l2.2} {\rm \cite[Corollary 2.3.8]{Bo_BeMaMi_96}}
	Let $D$ be a division ring with center $F$ and $\M_n(D)$ the matrix ring of degree $n$ over $D$. For any $a\in \M_n(D)$, the following conditions are equivalent:
	\begin{enumerate}
		\item $a$ is algebraic of degree $\le d$ over $F$.
		\item $g_{d}(a,b_1,b_2,\dots,b_d)=0$ for all $b_1,b_2,\dots,b_d\in \M_n(D)$.
	\end{enumerate}
\end{lemma}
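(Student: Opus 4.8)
The plan is to derive both implications from a single linear fact about the ring $R=\M_n(D)$ with center $F$: for arbitrary $u_0,u_1,\dots,u_d\in R$, the generalized polynomial
\[
h(u_0,\dots,u_d;y_1,\dots,y_d)=\sum_{\sigma\in S_{d+1}}\sign(\sigma)\,u_{\sigma(0)}y_1u_{\sigma(1)}y_2\cdots y_du_{\sigma(d)}
\]
vanishes under every substitution $y_i\mapsto b_i\in R$ if and only if $u_0,\dots,u_d$ are $F$-linearly dependent. Since $g_d(x,y_1,\dots,y_d)=h(1,x,x^2,\dots,x^d;y_1,\dots,y_d)$, applying this with $u_k=a^k$ finishes the proof once one recalls the elementary equivalence ``$a$ is algebraic of degree $\le d$ over $F$'' $\iff$ ``$1,a,\dots,a^d$ are $F$-linearly dependent'' (a nontrivial relation $\sum_{k=0}^{d}c_ka^k=0$ with $c_k\in F$ is literally a nonzero polynomial of degree $\le d$ killing $a$, so the minimal polynomial of $a$ over $F$ has degree $\le d$, and conversely). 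Two structural features of $h$ handle the easy half. First, $h$ is multilinear in $y_1,\dots,y_d$. Second, $h$ is \emph{alternating} in the slots $u_0,\dots,u_d$: if $u_i=u_j$ with $i\ne j$, the fixed-point-free involution $\sigma\mapsto(i\,j)\circ\sigma$ of $S_{d+1}$ matches each monomial with an equal monomial of opposite sign, so they cancel exactly, and the vanishing holds in any characteristic. Thus, for $(1)\Rightarrow(2)$: from a nontrivial relation write $a^m=\sum_{k\ne m}\lambda_ka^k$ with $\lambda_k\in F$, substitute into the $m$-th slot of $h(1,a,\dots,a^d;\vec b)$, and expand by multilinearity (the $\lambda_k$ are central, so they pull out); every resulting term has a power of $a$ occupying two distinct slots and dies by the alternating property, so $g_d(a,\vec b)=0$.

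The real content is $(2)\Rightarrow(1)$, i.e.\ the ``only if'' in the linear fact, which I would prove contrapositively: assuming $1,a,\dots,a^d$ are $F$-linearly independent, exhibit $b_1,\dots,b_d\in R$ with $g_d(a,\vec b)\ne 0$. When $[D:F]<\infty$ one can push the problem down to a matrix algebra over a field: the map $\vec y\mapsto g_d(a\otimes 1,\vec y)$ is $\overline F$-multilinear, so if it vanishes on $R^{\times d}$ it vanishes on $(R\otimes_F\overline F)^{\times d}\cong\M_N(\overline F)^{\times d}$, while $1,a,\dots,a^d$ stay $\overline F$-independent; hence one may assume $R=\M_N(\overline F)$ with the minimal polynomial of $a$ of degree $\ge d+1$. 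Passing to rational canonical form and choosing a cyclic vector $v$ for an invariant factor of degree $\ge d+1$, the vectors $v,av,\dots,a^dv$ are linearly independent; choosing a functional $\phi$ dual to them and feeding suitable rank-one operators built from $v$ and $\phi$ into the $b_i$ reduces $g_d(a,\vec b)$ to a generalized Vandermonde expression in the scalars $\phi(a^iv)$ that one checks to be nonzero. For the general, possibly infinite-dimensional, division ring one instead appeals to the structure theory of generalized polynomial identities: $R=\M_n(D)$ is simple, hence prime with extended centroid $F$, and by the Amitsur--Martindale theory a multilinear generalized identity whose coefficients are $1,a,\dots,a^d$ forces an $F$-linear dependence among them — precisely the statement packaged as \cite[Corollary~2.3.8]{Bo_BeMaMi_96}.

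The step I expect to be the genuine obstacle is exactly this last one over an arbitrary division ring: the clean rational-canonical-form and Vandermonde reasoning lives over a field, and over a noncommutative $D$ there is no determinant to fall back on, so $(2)\Rightarrow(1)$ cannot be obtained with bare hands — it requires the machinery of prime rings with a GPI (Martindale's ring of quotients and its ring of generalized polynomials, in which a multilinear expression is an identity iff the evident vanishing conditions on the coefficients hold). Granting that input, the remaining steps are the bookkeeping indicated above, which is why in the present paper the statement is quoted from \cite{Bo_BeMaMi_96} rather than reproved.
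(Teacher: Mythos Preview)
The paper gives no proof of this lemma at all: it is stated with a direct citation to \cite[Corollary~2.3.8]{Bo_BeMaMi_96} and used as a black box. Your proposal is therefore not so much an alternative to the paper's proof as a correct and well-organized outline of what lies behind the cited result --- and you anticipated exactly this in your final paragraph. The reduction of both directions to the alternating multilinear function $h$ in the $u_i$'s is the right framing; the easy direction $(1)\Rightarrow(2)$ is handled cleanly by the alternating property; and for $(2)\Rightarrow(1)$ you correctly identify that over a general division ring the argument rests on the Amitsur--Martindale GPI theory (a nonzero multilinear generalized polynomial with centrally independent coefficients cannot be an identity of a prime ring), which is precisely the content packaged in \cite{Bo_BeMaMi_96}. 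Your finite-dimensional detour via $\overline F$ and rational canonical form is a nice illustration but, as you note, is not needed once one invokes the GPI machinery.
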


 Let $R$ be a ring whose center contains a field $F$. As we have mentioned above, if $a\in R$ is algebraic over $F$, then the minimal polynomial $p_a(t)$ of $a$ over $F$ is not necessarily irreducible. 
 If $p_a(t)$ is irreducible, then it is obvious that  the subring $F[a]$ of $R$ generated by $a$ over $F$ is a field. For the convenience of use, we restate this fact as a lemma.
\begin{lemma}\label{l3.1} Let $F$ be a field and $R$ an algebra whose center  contains $F$ as a subfield. Assume that $a\in R$ is algebraic over $F$. If $p_a(t)$ is irreducible over $F$, then the subring $F[a]$ of $R$ generated by $a$ over $F$ is a field.
\end{lemma}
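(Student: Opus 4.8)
The plan is to identify $F[a]$ with the quotient ring $F[t]/(p_a(t))$ and then invoke the elementary fact that a commutative ring modulo a maximal ideal is a field. First I would note that, since $F$ lies in the center $Z(R)$, the subring $F[a]=\{f(a):f\in F[t]\}$ is a commutative $F$-algebra: every element is an $F$-linear combination of powers of $a$, and distinct powers of $a$ commute with one another and with the scalars from $F$.

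Next I would consider the evaluation homomorphism $\varphi\colon F[t]\to F[a]$ of $F$-algebras determined by $\varphi(t)=a$. It is surjective by the very definition of $F[a]$, and its kernel $I=\{f\in F[t]:f(a)=0\}$ is a nonzero ideal because $a$ is algebraic over $F$. Since $F[t]$ is a principal ideal domain, $I=(h)$ for some monic polynomial $h$. As $p_a(t)\in I$, we get $h\mid p_a$, while the minimality of $\deg p_a(t)$ among nonzero polynomials annihilating $a$ forces $\deg h\ge\deg p_a$; hence $h=p_a$ and $F[a]\cong F[t]/(p_a(t))$.

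Finally, since $p_a(t)$ is irreducible over the field $F$, the ideal $(p_a(t))$ is maximal in $F[t]$, so $F[t]/(p_a(t))$ is a field, and therefore so is $F[a]$. I do not expect any real obstacle here; the one point deserving a word of care is that the hypothesis $F\subseteq Z(R)$ is precisely what makes $F[a]$ commutative and legitimizes the division algorithm in $F[t]$ that pins down the kernel of $\varphi$. Without centrality, the ``minimal polynomial'' would only be a one-sided notion and this argument would not apply.
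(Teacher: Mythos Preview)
Your argument is correct and is exactly the standard justification; the paper itself does not give a proof, merely remarking that the conclusion is obvious and restating it as a lemma for convenience. There is nothing to compare: your evaluation-map argument is precisely what anyone would supply to unpack the word ``obvious'' here.
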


Let $K$ be a field and $R$ a ring containing $K$ as a subring. Assume that $R$ is a right vector space over $K$ of dimension $n$. Then, $R$ can be considered as a subring of $\End(R_K)$ via the injective ring homomorphism $\varphi: R\longrightarrow \End(R_K), \varphi(a)=\varphi_a$,  where $\varphi_a(x)=ax$ for every $x\in R$. Moreover, with respect to some basis of $R$ over $K$,  we have an isomorphism $\End (R_K)\stackrel{\psi}{\cong} \M_n(K)$. Recall that  
$$\tau=\psi\varphi: R\longrightarrow \M_n(K)$$
is called a \textit{right regular representation} of $R$. Due to  this representation, $R$ can be viewed as a subring of $\M_n(K)$. For an element $\alpha\in R$, by identifying $\alpha$ with its image $A_{\alpha}=\tau(\alpha)$, sometimes it is convenient to view $\alpha$ as an element of the matrix ring $\M_n(K)$.

\begin{remark}\label{r1}{\rm 	Let $D$ be a division ring with center $F$ and $K$ a maximal subfield of $D$. 

1. It is clear that $F\subseteq K$ and the inclusion is strict if $D$ is noncommutative. If $[D:F]=n^2$, then $\dim(D_K)=n$ (see  \cite[Theorem (15.8)]{Bo_La_91}). By the previous arguments, $D$ can be viewed as an $F$-subalgebra of $\M_n(K)$ via some right regular representation  $\tau$ of $D$ with respect to some basis. Note that the center of $D$ is $F$  but the center of $\M_n(K)$ is $K$. 
		
2. Let $\alpha\in D$. Assume that $p_{F, \alpha}(t)\in F[t]$ is the minimal polynomial of $\alpha$ over $F$. Viewing $\alpha$ as an element in $\M_n(K)$ via $\tau$, by \cite[Lemma 16.3 (1) and Lemma~ 16.4]{Bo_BeOg_13}, the minimal polynomial $p_{K,\alpha}(t)$ of $\alpha$ over $K$ coincides with $p_{F, \alpha}(t)$. For this reason, from now on, for $\alpha \in D$, we use the simple notation  $p_{\alpha}(t)$ to denote the minimal polynomial of $\alpha$ either over $F$ or over $K$.}
\end{remark}

For further use, we need some notations. Let $n_1,n_2,\dots,n_t$ be $t$ positive integers and $F$ a field. Assume that 
$$A_1\in \mathrm M_{n_1}(F), A_2\in \mathrm M_{n_2}(F),\dots, A_t\in \mathrm M_{n_t}(F).$$ 
We denote by $A_1\oplus A_2\oplus \dots \oplus A_t$ the matrix $$\begin{bmatrix}
A_1 & 0 & \dots & 0 \\
0 & A_2 & \dots & 0\\
\vdots & \vdots & \ddots & \vdots \\
0 & 0 &\dots & A_t\\
\end{bmatrix} \in \mathrm{M}_{n_1+n_2+\dots+n_t}(F),$$ and by $e_{ij}$ the matrix in which the $(i,j)$-entry is $1$ and all other entries are $0$.

\begin{remark}\label{r2}
	{\rm Given a monic polynomial $f(t)=a_0+a_1t+\dots+a_{n-1}t^{n-1}+t^{n}\in F[t]$, the \textit{Frobenius companion matrix} $C_f$ of  $f(t)$ is $$C_f=\sum_{i=1}^{n-1} e_{i+1,i} -\sum_{i=1}^n {a_{i-1} e_{i,n}}=\begin{bmatrix}
		0 & 0 & 0&  \dots & 0& -a_0 \\
		1 & 0 &  0&\dots & 0& -a_1\\
		0 & 1 &  0&\dots & 0&-a_2\\
		\vdots & \vdots & \ddots & & \vdots \\
		0 & 0 &  0&\dots & 0&-a_{n-2}\\
		0 & 0 &0 & \dots & 1& -a_{n-1}\\
		\end{bmatrix} \in \mathrm{M}_{n}(F).$$ It is well known that the characteristic polynomial and the minimal polynomial of $C_f$  coincide and they are both equal to $f(t)$. Moreover, if $A$ is a matrix in $\mathrm{M}_n(F)$ whose  characteristic polynomial and minimal polynomial  coincide and equal to  $f(t)$, then $A$ is similar to $C_f$ (see \cite[Theorem 3.3.15]{Bo_HoJo_85}). Recall that two matrices $A, B\in \M_n(F)$ are similar (written $A\sim B$) if there exists a matrix $P\in\GL_n(F)$ such that $A=P^{-1}BP$. }
\end{remark}

\begin{lemma}\label{l3.3} Let $L/F$ be a separable field extension of finite degree $n$. Assume that $\alpha$ is an element from $L$,   $p_\alpha(t)=a_0+a_1t+\dots+a_{d-1}t^{d-1}+t^{d}\in F[t]$ is the  minimal polynomial of $\alpha$, and $$C_p=\sum_{i=1}^{d-1} e_{i+1,i} -\sum_{i=1}^d {a_{i-1} e_{i,d}} \in \mathrm{M}_{d}(F)$$ is the Frobenius companion matrix of $p_\alpha(t)$. Then, $n=kd$ and there exists a basis $\Cal B$ of $L$ over $F$ such that via right regular representation of $L$ with respect to $\Cal B$, the corresponding matrix $A_\alpha\in \M_n(F)$ of $\alpha$ is $A_\alpha=C_p\oplus C_p\oplus \dots \oplus C_p$ ($k$ times).
\end{lemma}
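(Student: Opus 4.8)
The plan is to route through the intermediate field $F(\alpha)$ and exploit that multiplication by $\alpha$ respects the decomposition of $L$ into one‑dimensional $F(\alpha)$‑subspaces. Since $p_\alpha(t)$ is the minimal polynomial of $\alpha$ over the field $F$, it is irreducible, so $F[\alpha]=F(\alpha)$ is a subfield of $L$ with $[F(\alpha):F]=d$; the tower law then gives $n=[L:F]=[L:F(\alpha)]\cdot[F(\alpha):F]=kd$ with $k:=[L:F(\alpha)]$, which is the first assertion. (Separability is not actually needed for this lemma; it is merely the standing hypothesis in the applications.)

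Next I would fix an $F(\alpha)$‑basis $v_1,\dots,v_k$ of $L$, so that $L=\bigoplus_{i=1}^{k}F(\alpha)v_i$, and refine it to the $F$‑basis
$$\Cal B=\big(v_1,\alpha v_1,\dots,\alpha^{d-1}v_1,\ v_2,\alpha v_2,\dots,\alpha^{d-1}v_2,\ \dots,\ v_k,\alpha v_k,\dots,\alpha^{d-1}v_k\big),$$
using that $1,\alpha,\dots,\alpha^{d-1}$ is an $F$‑basis of $F(\alpha)$. Since $L$ is commutative, the right regular representation records multiplication by $\alpha$, and this map carries each summand $F(\alpha)v_i$ into itself; hence with respect to $\Cal B$ the matrix $A_\alpha$ is block diagonal with $k$ blocks, each of size $d$.

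It remains to identify a single block. For each $i$, the $F$‑linear isomorphism $F(\alpha)\to F(\alpha)v_i$, $f\mapsto fv_i$, commutes with multiplication by $\alpha$ and carries the power basis $1,\alpha,\dots,\alpha^{d-1}$ onto $v_i,\alpha v_i,\dots,\alpha^{d-1}v_i$. Therefore every block of $A_\alpha$ equals the matrix of multiplication by $\alpha$ on $F(\alpha)$ in the basis $1,\alpha,\dots,\alpha^{d-1}$. Using $\alpha\cdot\alpha^{j}=\alpha^{j+1}$ for $0\le j\le d-2$ and $\alpha\cdot\alpha^{d-1}=-(a_0+a_1\alpha+\dots+a_{d-1}\alpha^{d-1})$, a direct computation shows this matrix is exactly $C_p=\sum_{i=1}^{d-1}e_{i+1,i}-\sum_{i=1}^{d}a_{i-1}e_{i,d}$, whence $A_\alpha=C_p\oplus C_p\oplus\dots\oplus C_p$ ($k$ times).

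The only point demanding care is bookkeeping: $\Cal B$ must be ordered by grouping each $F(\alpha)$‑line consecutively so that the block‑diagonal shape appears, and one should fix the convention for the regular representation so that a cyclic block of minimal polynomial $p_\alpha(t)$ comes out as $C_p$ rather than its transpose; since the statement only asserts the existence of a suitable basis, one is free to choose $\Cal B$ accordingly. No deeper obstacle is expected.
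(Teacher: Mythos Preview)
Your argument is correct and follows the same core strategy as the paper: pass through the intermediate field $F(\alpha)$, choose an $F$-basis of $L$ compatible with the tower $F\subset F(\alpha)\subset L$, and observe that multiplication by $\alpha$ is block diagonal with each block equal to $C_p$. The one substantive difference is in the choice of $F(\alpha)$-basis of $L$: the paper invokes separability of $L/F(\alpha)$ to obtain a primitive element $\beta$ with $L=F(\alpha)(\beta)$ and then uses $\{1,\beta,\dots,\beta^{k-1}\}$, whereas you simply pick an arbitrary $F(\alpha)$-basis $v_1,\dots,v_k$. Your version is slightly cleaner and, as you correctly note, shows that the separability hypothesis is not actually needed for this lemma; the paper's use of a primitive element is a convenience rather than a necessity.
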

\begin{proof} It is trivial that $n=kd$, where $k=[L:F(\alpha)]$. Since $L/F$ is separable, so is $L/F(\alpha)$. Let $\beta\in L$ such that $L=F(\alpha) (\beta)$. Then, consider the basis $$\Cal B=\{1, \alpha, \dots, \alpha^{d-1}, \beta, \alpha\beta,\dots, \alpha^{d-1}\beta, \dots,\beta^{k-1}, \alpha\beta^{k-1},\dots, \alpha^{d-1}\beta^{k-1} \}$$ of $L$ over $F$.  By checking one by one elements in the basis $\Cal B$ for the right regular representation of $\alpha$ with respect to $\Cal B$, one has  $A_\alpha=C_p\oplus C_p\oplus \dots \oplus C_p$ ($k$ times). 
\end{proof}
The following result follows immediately from \cite[corollaries 1 and 2]{Pa_ChFoLe_04}.
\begin{lemma}\label{l3.4} Let $n$ be a positive integer greater than $2$ and $n=n_1+n_2+\dots+n_d$, where $n_i>1$. For every $1\le i\le d$, assume that $C_i$ is the Frobenious companion matrix of some polynomial over a field $F$. If $C=C_1\oplus C_2 \oplus \dots \oplus C_d\in \mathrm {M}_{n}(F)$, then there exist $A\in \GL_n(F)$ and $B\in \mathrm{M}_n(K)$ such that  $CAC^{-1}A^{-1}$ and  $BC-CB$ are algebraic over $F$ of degree $n$.
\end{lemma}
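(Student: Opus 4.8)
The plan is to reduce Lemma~\ref{l3.4} directly to the two cited corollaries from \cite{Pa_ChFoLe_04} by building, out of the block-diagonal companion matrix $C = C_1 \oplus \dots \oplus C_d$, a single auxiliary element whose commutator (multiplicative, resp.\ additive) with $C$ is forced to have minimal polynomial of degree $n$. First I would recall what those corollaries say in the present notation: for a companion matrix $C_i \in \M_{n_i}(F)$ with $n_i > 1$, corollary~1 produces $A_i \in \GL_{n_i}(F)$ with $C_i A_i C_i^{-1} A_i^{-1}$ nonderogatory (characteristic polynomial $=$ minimal polynomial, hence algebraic of degree exactly $n_i$), and corollary~2 produces $B_i \in \M_{n_i}(K)$ with $B_i C_i - C_i B_i$ nonderogatory of degree $n_i$. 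So on each block we already have the degree-$n_i$ conclusion; the work is to glue the blocks so that the global commutator is nonderogatory of degree $n = \sum n_i$.

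The key step is the gluing, and it is where I expect the main obstacle to lie. The naive choice $A = A_1 \oplus \dots \oplus A_d$ gives $CAC^{-1}A^{-1} = \bigoplus_i C_i A_i C_i^{-1} A_i^{-1}$, whose minimal polynomial is the \emph{least common multiple} of the blockwise minimal polynomials, not their product — so the global degree can collapse far below $n$ if the blocks share roots. The remedy is to perturb each $A_i$ (staying inside $\GL_{n_i}(F)$, resp.\ $\M_{n_i}(K)$) so that the $d$ resulting nonderogatory commutators have pairwise coprime characteristic polynomials; then the minimal polynomial of the direct sum is their product, of degree $n$. One way to arrange coprimality: the corollaries actually give some freedom in the auxiliary matrix (e.g.\ conjugating $A_i$ by a suitable invertible matrix, or scaling, shifts the eigenvalues), and since $F$ can be enlarged if necessary — or more carefully, since the construction in \cite{Pa_ChFoLe_04} can be run with a free parameter — one can translate the spectrum of the $i$-th commutator by distinct scalars $\lambda_i$ so that the $d$ characteristic polynomials become coprime. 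I would also double-check the degenerate case $d = 1$, where the statement is just corollary~1 (resp.\ corollary~2) verbatim, and note that the hypothesis $n_i > 1$ is exactly what the corollaries require (a $1\times 1$ block has $C_i$ central, so $C_i A_i C_i^{-1} A_i^{-1} = 1$ and $B_i C_i - C_i B_i = 0$, which would wreck both coprimality and the degree count).

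Concretely, the steps in order: \textbf{(1)} apply \cite[Corollary~1]{Pa_ChFoLe_04} to each block to get $A_i \in \GL_{n_i}(F)$ with $D_i := C_i A_i C_i^{-1} A_i^{-1}$ nonderogatory of degree $n_i$, and likewise \cite[Corollary~2]{Pa_ChFoLe_04} to get $B_i \in \M_{n_i}(K)$ with $E_i := B_i C_i - C_i B_i$ nonderogatory of degree $n_i$; \textbf{(2)} observe that replacing $A_i$ by $\mu_i A_i$ for a scalar $\mu_i \in F^*$ replaces $D_i$ by $\mu_i D_i$ (since $C_i(\mu_i A_i)C_i^{-1}(\mu_i A_i)^{-1} = C_i A_i C_i^{-1}A_i^{-1}$ — so scaling $A_i$ does \emph{not} move the spectrum; instead I would perturb via a translation of $B_i$ in the additive case, and in the multiplicative case use the extra latitude in the corollary's construction — this is the delicate point to pin down) to make the characteristic polynomials $\chi_{D_1}, \dots, \chi_{D_d}$ pairwise coprime, and similarly $\chi_{E_1}, \dots, \chi_{E_d}$; \textbf{(3)} set $A = A_1 \oplus \dots \oplus A_d \in \GL_n(F)$ and $B = B_1 \oplus \dots \oplus B_d \in \M_n(K)$, so that $CAC^{-1}A^{-1} = D_1 \oplus \dots \oplus D_d$ and $BC - CB = E_1 \oplus \dots \oplus E_d$; \textbf{(4)} conclude that the minimal polynomial of $CAC^{-1}A^{-1}$ is $\prod_i \chi_{D_i}$, of degree $n$, and likewise for $BC - CB$, hence both are algebraic over $F$ of degree $n$. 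The crux — and the only place requiring genuine care rather than bookkeeping — is step~(2): extracting enough freedom from the constructions of \cite[corollaries~1 and~2]{Pa_ChFoLe_04} to guarantee the coprimality of the $d$ characteristic polynomials; everything else is the standard fact that the minimal polynomial of a block-diagonal matrix with pairwise-coprime nonderogatory blocks is the product of the blocks' characteristic polynomials.
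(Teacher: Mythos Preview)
The paper does not give a proof of Lemma~\ref{l3.4} at all: it simply asserts that the statement \emph{follows immediately} from \cite[Corollaries~1 and~2]{Pa_ChFoLe_04}. Read in that light, those corollaries already treat a block-diagonal matrix $C=C_1\oplus\cdots\oplus C_d$ of Frobenius companion blocks (with each $n_i>1$) and directly produce the required $A\in\GL_n(F)$ and $B\in\M_n(K)$; the lemma is essentially a restatement, not a consequence of a blockwise application. Your reconstruction starts from a different premise --- that the corollaries apply only to a \emph{single} companion block --- and this forces you into a gluing step that the paper never contemplates.

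That gluing step is where your argument has a genuine gap. You correctly observe that the naive choice $A=A_1\oplus\cdots\oplus A_d$ gives a block-diagonal commutator whose minimal polynomial is an $\mathrm{lcm}$, not a product, and you then propose to perturb the $A_i$ so as to make the blockwise characteristic polynomials pairwise coprime. But you immediately note that scaling $A_i\mapsto\mu_iA_i$ leaves $C_iA_iC_i^{-1}A_i^{-1}$ unchanged, so your only stated perturbation mechanism is vacuous in the multiplicative case; what remains is an appeal to unspecified ``extra latitude in the corollary's construction.'' That is not a proof: over a small field $F$ (the lemma imposes no cardinality hypothesis on $F$) there may be very little room to move spectra around, and in the application in Theorem~\ref{t3.3} the blocks $C_i$ are in fact \emph{identical}, so the blockwise commutators produced by any uniform construction will coincide and their characteristic polynomials will certainly not be coprime without a genuinely different idea. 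Before investing in the gluing strategy, you should check the actual statements of \cite[Corollaries~1 and~2]{Pa_ChFoLe_04}; if, as the paper's phrasing indicates, they already handle the direct-sum matrix $C$ globally, then your steps (2)--(4) are unnecessary and the difficulty you flagged disappears.
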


\begin{remark} \label{r3} {\rm Let $F$ be a field. Assume that $L$ is a subfield of $\mathrm \M_n(F)$ containing $F$ such that $\dim_FL=n$. Via the right regular representation $\tau$ with respect to some basis $\Cal B$ of $L$ over $F$, we can view $L$ as a subring of $\M_n(F)$.  According to \cite[Lemma 5]{Pa_ChFoLe_04}, if $a$ is an element from $L$, then $a$ and $A_a:=\tau(a)$ are similar; that is, $a\sim A_a$.}
\end{remark}

\section{Maximal subfields generated by commutators}
Let $D$ be a centrally finite division ring with center $F$.  By \cite[Theorem (15.12)]{Bo_La_91} and \cite[Corollary 5.7]{Bo_morandi_96}, there exists a separable over $F$ element $a\in D$ such that $F(a)$ is a maximal subfield of $D$. Let $\Cal M$ denote the set of all elements $a$ in $D$ such that $F(a)$ is a maximal  subfield of $D$. It is natural to ask how big is the set $\Cal M$?  There are many reasons why this question is interesting. For example, Albert-Brauer's theorem  says that if $a\in \Cal M$, then there exists $b\in D$ such that the set $\{a^ib a^j\mid 1\le i,j\le n\}$ is a basis of the vector space $D$ over $F$ provided $[D:F]=n^2$ \cite[Theorem (15.16)]{Bo_La_91}. As we have mentioned above, the first conclusion is that $\Cal M$ contains some separable over $F$ element of $D$. In \cite{Pa_MaAkMeHa_95}, it was shown that the intersection of $\Cal M$ and the $r$-th derived subgroup $D^{(r)}$ is non-empty. Although they are not redirect results, using the ideas in \cite[theorems 4 and 6]{Pa_ChFoLe_04}, we can show that there exist $a,b,c,d\in D^*$ such that $ab-ba$ and $cdc^{-1}d^{-1}$ belong to $\Cal M$, which answered affirmatively questions of M. Mahdavi-Hezavehi in \cite[problems 28,  29]{Pa_Ma_00}. With the same ideas, we also prove that if $\Char(D)=0$, then, for any $a\in D^*$, there exist $b,c\in D^*$ such that $ab-ba$ and $aca^{-1}c^{-1}$ are in $\Cal M$. Some recent results on this topic were presented in \cite{Pa_AaBi_19}. The aim of this section is to show that if $N$ is a non-central subnormal subgroup of $D^*$, then there exist $a\in N$ and $b,c\in D^*$ such that $aba^{-1}b^{-1}$ and $ac-ca$ are in $\Cal M$.

The following fact is contained in the proof of \cite[Theorem 9]{Pa_He_78}. For the convenience, we restate it as a lemma. 
\begin{lemma} \label{t2.1} Let $D$ be a division ring with center $F$ and $N$ a subnormal subgroup of $D^*$. If $N$ is purely inseparable over $F$, then $N$ is central; that is, $N\subseteq F$.
\end{lemma}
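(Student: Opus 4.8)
The plan is as follows. If $\Char D=0$, then ``purely inseparable over $F$'' simply says $N\subseteq F$, so assume $\Char D=p>0$; thus each $a\in N$ satisfies $a^{p^{k}}\in F$ for some $k$. Suppose, for a contradiction, that $N\not\subseteq F$. Since $N$ is a group, hence closed under taking powers, I first pass to an element of exponent $p$: choose $a\in N\setminus F$ with $a^{p^{k}}\in F$ and $k$ minimal, and put $b:=a^{p^{k-1}}\in N$; then $b\notin F$ while $c:=b^{p}\in F$, and necessarily $c\notin F^{p}$, so that $F(b)/F$ is purely inseparable of degree $p$. The inner derivation $\delta_{b}\colon x\mapsto bx-xb$ of $D$ then satisfies $\delta_{b}^{\,p}=\delta_{b^{p}}=0$ (the characteristic‑$p$ identity $\delta_{y}^{\,p}=\delta_{y^{p}}$ applied with $y=b$, since $b^{p}=c$ is central), while $\delta_{b}\neq 0$ because $b\notin Z(D)=F$. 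Taking $x_{0}\in D$ with $\delta_{b}(x_{0})\neq 0$ and applying the standard construction for a nilpotent operator to the $\delta_{b}$‑invariant subspace it generates (note $\ker\delta_{b}=C_{D}(b)$ and $\delta_{b}$ is left $C_{D}(b)$‑linear), one extracts $u\in D$ with $bu-ub=1$; in particular $u\neq 0$ and $u\notin F$.

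From $bu-ub=1$ one obtains $bub^{-1}=u+b^{-1}$, hence the multiplicative commutator
$$[b,u]:=bub^{-1}u^{-1}=1+b^{-1}u^{-1}=1+(ub)^{-1}.$$
Here is where subnormality enters. If $N$ is normal in $D^{*}$, then $[b,u]=b\,(ub^{-1}u^{-1})\in N$, since $b^{-1}\in N$; in the general subnormal case one argues along a subnormal series $N\trianglelefteq N_{1}\trianglelefteq\dots\trianglelefteq D^{*}$, using also standard facts about non‑central subnormal subgroups (e.g.\ that the centralizer of such an $N$ in $D$ is just $F$), to land an element of this shape inside $N$ --- this is exactly the portion of the argument carried out in the proof of \cite[Theorem 9]{Pa_He_78}. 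In either case we obtain $1+s^{-1}\in N$ with $s=ub\neq 0$. Since $1$ is central, $(1+s^{-1})^{p^{k}}=1+s^{-p^{k}}$ for all $k$, and $1+s^{-1}\in N$ is purely inseparable over $F$, so $(1+s^{-1})^{p^{m}}\in F$ for some $m$; therefore $s^{p^{m}}\in F$, i.e.\ $s=ub$ is purely inseparable over $F$.

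Finally, a direct computation using only $bu-ub=1$ (so $bub=ub^{2}+b$) gives $sb-bs=ub^{2}-bub=-b$, whence the inner derivation $\delta_{s}\colon x\mapsto sx-xs$ satisfies $\delta_{s}^{\,j}(b)=(-1)^{j}b\neq 0$ for every $j\geq 1$; in particular $\delta_{s}$ is not nilpotent. On the other hand, $s$ purely inseparable over $F$ forces $\delta_{s}^{\,p^{m}}=\delta_{s^{p^{m}}}=0$, since $s^{p^{m}}\in F$ is central --- a contradiction. Hence $N\subseteq F$. I expect the genuinely delicate point to be the middle paragraph: legitimately transferring the local identity $[b,u]=1+(ub)^{-1}$ into the assertion ``some $1+s^{-1}$ lies in $N$'' when $N$ is merely subnormal rather than normal (the element $u$, built from the derivation, need not lie in any term of a subnormal series through $N$). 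This is precisely why the hypothesis is ``subnormal'' and why one must lean on Herstein's subnormal‑subgroup techniques; an alternative is to invoke instead the available results that a subnormal subgroup of $D^{*}$ radical over $F$ --- a fortiori, one purely inseparable over $F$ --- is central.
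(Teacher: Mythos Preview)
The paper gives no self-contained proof here: its entire argument is the reference ``See the end of the proof of \cite[Theorem~9]{Pa_He_78}.'' Your reconstruction is sound for the normal case, and the step you rightly flag as delicate --- producing an element of the shape $1+s^{-1}$ inside $N$ when $N$ is merely subnormal --- is exactly the portion handled by Herstein's subnormal machinery, so your deferral to \cite{Pa_He_78} at that point coincides with the paper's own strategy.
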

\begin{proof} See the end of the proof of \cite[Theorem 9]{Pa_He_78}.
\end{proof} 
The following result can be considered as  an extension of the Noether-Jacobson Theorem (see \cite[Theorem (15.11)]{Bo_La_91}) for subnormal subgroups in division rings.
\begin{lemma}\label{lem:3.2}
	Let $D$ be a division ring with center $F$ and $N$ a subnormal subgroup of $D^*$. If $N$ is non-central and algebraic over $F$, then $N$ contains a non-central separable element over $F$. 
\end{lemma}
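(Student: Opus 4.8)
The plan is to reduce the statement to the classical Noether--Jacobson theorem by a contradiction argument combined with Lemma \ref{t2.1}. Suppose $N$ is non-central and algebraic over $F$ but contains no separable noncentral element over $F$. First I would observe that $\Char D=p>0$ is forced: in characteristic zero every algebraic extension is separable, so each $a\in N$ would generate a separable extension $F(a)/F$, and since $N$ is noncentral some such $a$ lies outside $F$, giving a separable noncentral element immediately. So assume $p>0$.

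Next, I would analyze an arbitrary $a\in N$. Since $a$ is algebraic over $F$, the extension $F(a)/F$ is finite, hence $F(a)/F$ decomposes through its maximal separable subextension $S=S_a$, with $F(a)/S$ purely inseparable. The element $a^{p^k}$ for suitable $k$ lies in $S$ and generates $S$ over $F$ (more precisely, for $k$ large enough $a^{p^k}$ is separable over $F$). If $a^{p^k}\notin F$ for some $a\in N$ and some $k$, then $a^{p^k}$ is a noncentral separable element — but one must check it lies in $N$. This is where the subnormality of $N$ is essential: $N$ is a subgroup, so $a^{p^k}\in N$ automatically. Hence the assumption forces $a^{p^k}\in F$ for every $a\in N$ and every $k\ge 0$ with $a^{p^k}$ separable over $F$; in particular each $a\in N$ has the property that some $p$-power $a^{p^k}$ is central. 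This says precisely that $N/(N\cap F)$ is a $p$-group of bounded... actually, that $N$ is radical (purely inseparable) over $F$ in the sense that every element of $N$ satisfies $a^{p^k}\in F$.

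At that point I would invoke Lemma \ref{t2.1}: a subnormal subgroup $N$ of $D^*$ that is purely inseparable over $F$ must be central, $N\subseteq F$. This contradicts the hypothesis that $N$ is non-central, completing the proof. The main obstacle — the only place requiring care — is the passage "$a$ algebraic over $F$ with $F(a)/F$ not separable $\Rightarrow$ a suitable $p$-power of $a$ is separable over $F$ and still generates a nontrivial extension unless $a^{p^k}\in F$": one must verify that if no power $a^{p^k}$ escapes $F$ then $a$ itself is purely inseparable over $F$, so that $N$ being purely inseparable over $F$ follows, and that this holds simultaneously for all $a\in N$. Once the structural claim "$N$ purely inseparable over $F$" is in place, Lemma \ref{t2.1} finishes everything, so the bulk of the work is elementary field theory in characteristic $p$ applied elementwise to $N$, using that $N$ is closed under taking powers.
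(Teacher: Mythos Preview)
Your proposal is correct and follows essentially the same route as the paper: reduce to characteristic $p$, use that every algebraic element has a $p$-th power that is separable over $F$, and invoke Lemma~\ref{t2.1} to rule out the possibility that every element of $N$ is purely inseparable. The only cosmetic difference is that the paper argues directly (Lemma~\ref{t2.1} gives an $a\in N$ not purely inseparable, and then $a^{p^n}$ is the desired noncentral separable element) whereas you phrase it as a contradiction; the mathematical content is identical.
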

\begin{proof} Clearly, it suffices to consider the case of prime characteristic; that is, $\Char(D)=p$ is a prime. Assume that  $N$ is a non-central subnormal subgroup of $D^*$ which is algebraic over $F$. According to Lemma~\ref{t2.1}, $N$ contains an element $a$ which is not purely inseparable over $F$. By \cite[Proposition 4.6]{Bo_morandi_96}, there exists a positive integer $n$ such that $a^{p^n}$ is separable over $F$. Since $a$ is not purely inseparable over $F$, we deduce that $a^{p^n}\notin F.$
\end{proof}

The following theorem is the main result of this section.
\begin{theorem}\label{t3.3} Let $D$ be a centrally finite division ring with center $F$. Assume that $N$ is a non-central subnormal subgroup of $D^*$. Then, there exist $a\in N$ and $b,c\in D^*$  such that $F(aba^{-1}b^{-1})$ and $F(ac-ca)$ are maximal subfields of $D$. 
\end{theorem}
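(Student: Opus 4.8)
The plan is to reduce the statement to the non-vanishing on $D$ of two generalized Laurent polynomials built from a single element $a\in N$, and to establish that non-vanishing via Lemma~\ref{l2.3} by transferring the problem to a matrix ring in which Lemma~\ref{l3.4} applies. First the elementary reductions: a central subgroup is not non-central, so $D$ is noncommutative and $[D:F]=n^2$ with $n\ge 2$, and the case $n=2$ is treated separately at the end. Assume $n\ge 3$. Since $D$ is centrally finite, $N$ is algebraic over $F$, so Lemma~\ref{lem:3.2} provides a noncentral $a\in N$ that is separable over $F$; put $d=[F(a):F]\ge 2$ and let $p_a\in F[t]$ be its minimal polynomial, which is irreducible by Lemma~\ref{l3.1}. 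Embed $F(a)$ into a maximal subfield $L$ of $D$ that is separable over $F$ (for instance a separable maximal subfield of the division ring $C_D(F(a))$, whose center is $F(a)$); then Lemma~\ref{l3.3} yields $n=kd$ and, with respect to a suitable $F$-basis of $L$, realizes $a$ in the right regular representation of $L$ as $A_a:=C_{p_a}\oplus\dots\oplus C_{p_a}$ ($k$ summands), where $C_{p_a}$ is the companion matrix of $p_a$.

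Next I would set up the matrix model. Let $K$ be a maximal subfield of $D$; by Remark~\ref{r1} the right regular representation embeds $D$ as an $F$-subalgebra of $\M_n(K)$ with $[D:F]=n^2$, so $\Cal I(D)\subseteq\Cal I(\M_n(K))$ by Lemma~\ref{l2.3}. Let $\bar a\in\M_n(K)$ denote the image of $a$. Its minimal polynomial over $K$ is $p_a$ by the second part of Remark~\ref{r1}, and its characteristic polynomial over $K$ is the reduced characteristic polynomial of $a$, which equals $p_a^k$ because $p_a$ is irreducible of degree $d$; since $a$ is separable over $F$, $p_a$ is squarefree over $K$, and a short comparison of invariant factors then shows that both $\bar a$ and $A_a$ have all invariant factors equal to $p_a$. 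Hence $\bar a$ and $A_a$ are conjugate in $\GL_n(K)$; fix $P$ with $P^{-1}\bar aP=A_a$.

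Now for the transfer. Apply Lemma~\ref{l3.4} to $C=A_a$ (admissible since $n>2$ and each block has size $d>1$): there exist $\tilde A\in\GL_n(F)$ and $\tilde B\in\M_n(F)$ such that $A_a\tilde AA_a^{-1}\tilde A^{-1}$ and $\tilde BA_a-A_a\tilde B$ are algebraic over $F$ of degree $n$; as $\tilde BA_a-A_a\tilde B=(\tilde B+\lambda I)A_a-A_a(\tilde B+\lambda I)$, we may take $\tilde B\in\GL_n(F)$. Conjugating by $P$, the matrices $\bar a(P\tilde AP^{-1})\bar a^{-1}(P\tilde AP^{-1})^{-1}$ and $(P\tilde BP^{-1})\bar a-\bar a(P\tilde BP^{-1})$ are still algebraic of degree $n$ over $F$ (equivalently over $K$), hence, by Lemma~\ref{l2.2}, not algebraic of degree $\le n-1$, so $g_{n-1}$ does not vanish identically on them; using that $g_{n-1}$ is multilinear in its last $n-1$ arguments (so those may be perturbed into $\GL_n(K)$), we conclude that the generalized Laurent polynomials
$$q=g_{n-1}(a\,y\,a^{-1}y^{-1},z_1,\dots,z_{n-1}),\qquad q'=g_{n-1}(wa-aw,z_1,\dots,z_{n-1})$$
in $D *_F F\langle y,w,z_1,\dots,z_{n-1}\rangle$ (with $a$ a coefficient) are not generalized Laurent identities of $\M_n(K)$, hence, by Lemma~\ref{l2.3}, not identities of $D$. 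So there are $b,w_0\in D^*$ with $aba^{-1}b^{-1}$ and $w_0a-aw_0$ not algebraic of degree $\le n-1$ over $F$; as every element of $D$ has degree $\le n$ over $F$, both have degree exactly $n$, so $F(aba^{-1}b^{-1})$ and, with $c:=-w_0$, $F(ac-ca)=F(w_0a-aw_0)$ are maximal subfields of $D$, the same $a\in N$ serving for both.

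Finally, for $n=2$ every noncentral $a\in N$ already has $F(a)$ a maximal subfield with $C_D(a)=F(a)$. The $F$-linear map $x\mapsto ax-xa$ has kernel $F(a)$ of $F$-dimension $2$, so its image has $F$-dimension $2$ and cannot lie in the $1$-dimensional subspace $F$; choose $c\in D^*$ with $ac-ca\notin F$. If $aba^{-1}b^{-1}\in F$ for every $b\in D^*$, then, the values being central, $b\mapsto aba^{-1}b^{-1}$ is a group homomorphism $D^*\to F^*$ with kernel $F(a)^*$, so $F(a)$ is stable under all inner automorphisms of $D$, contradicting the Cartan--Brauer--Hua theorem since $F\subsetneq F(a)\subsetneq D$; hence $aba^{-1}b^{-1}\notin F$ for some $b\in D^*$. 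The points I expect to require most care are identifying the $K$-conjugacy class of $\bar a$ so that Lemma~\ref{l3.4} truly applies to it, and the bookkeeping that keeps all substituted matrices invertible so that the passage from $\Cal I(\M_n(K))$ to $\Cal I(D)$ through Lemma~\ref{l2.3} is legitimate.
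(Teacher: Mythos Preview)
Your proposal is correct and follows the same overall strategy as the paper: pick a separable noncentral $a\in N$ via Lemma~\ref{lem:3.2}, embed $D\subseteq\M_n(K)$ for a maximal subfield $K$, show that the image $\bar a$ is similar to $C_{p_a}^{\oplus k}$ so that Lemma~\ref{l3.4} applies, and then use Lemmas~\ref{l2.2} and~\ref{l2.3} to pull the conclusion back to $D$; the case $n=2$ is handled separately.

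The differences are in implementation rather than in the architecture. For the similarity $\bar a\sim C_{p_a}^{\oplus k}$, the paper chooses $K$ separable with $F(a)\subseteq K=F(b)$, passes to the companion matrix of $p_b$ to land inside $\M_n(F)$, and then invokes Lemma~\ref{l3.3} together with Remark~\ref{r3}; you instead identify the characteristic polynomial of $\bar a$ as the reduced characteristic polynomial $p_a^{k}$ and use that $p_a$ is squarefree over $K$ (by separability of $a$) to read off the invariant factors directly. Your route is shorter but imports the standard fact that the regular representation over a maximal subfield computes the reduced characteristic polynomial, which the paper does not need. For $n=2$, the paper does an explicit calculation, while you use a dimension count for the additive case and Cartan--Brauer--Hua for the multiplicative case; both are valid. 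Finally, the paper runs the transfer argument in the form ``if every $axa^{-1}x^{-1}$ in $D$ has degree $\le s$, then the same holds in $\M_n(K)$, forcing $s\ge n$'', whereas you phrase the contrapositive; these are equivalent, and your remark about perturbing the $z_i$ into $\GL_n(K)$ via multilinearity (and $\tilde B$ into $\GL_n(F)$) is exactly the Zariski-density step that both arguments tacitly need.
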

\begin{proof} Let $[D:F]=n^2$. By Lemma~\ref{lem:3.2}, we can find a non-central element $a\in N$ such that $a$ is  separable over $F$. In view of  \cite[Theorem (15.8)]{Bo_La_91}, it suffices to show that there exist $b,c\in D^*$ such that $aba^{-1}b^{-1}$ and $ac-ca$ are algebraic  elements of degree $n$ over $F$. 
	\bigskip
	
	{\it Case 1. $n=2$.} Let $\alpha\in D^*$ such that $a\alpha\ne \alpha a$. If $\beta a\beta^{-1} a^{-1}\in F$ for every $\beta\in D^*$, then $$(\alpha+1)\left( (\alpha+1)^{-1} a (\alpha+1)a^{-1}-(\alpha^{-1}a\alpha a^{-1})  \right) $$$$= a (\alpha+1)a^{-1}-(\alpha+1)\alpha^{-1}a\alpha a^{-1}$$ $$=a\alpha a^{-1}+1-a\alpha a^{-1}-\alpha^{-1}a\alpha a^{-1}$$ 
	$$  =1-\alpha^{-1}a\alpha a^{-1}\in F^*.$$ 
This implies that $\alpha\in F^*$, a contradiction.
Hence, there exists $b\in D^*$ such that $b ab^{-1} a^{-1}$ is algebraic of degree $2$ over $F$. Similarly, if $a\beta-\beta a\in F$ for every $\beta\in D$, then $a (a\alpha)-(a\alpha )a = a (a\alpha-\alpha a)$. Observe that $a\alpha-\alpha a\ne 0$ and $a (a\alpha)-(a\alpha )a\in F$, so $$a=(a (a\alpha)-(a\alpha )a)(a\alpha-\alpha a)^{-1}\in F,$$ which is a contradiction. Hence, there exists $c\in D^*$ such that $ac-ca\not\in F$. But this implies that $ac-c a$ is algebraic of degree $2$ over $F$. 
	\bigskip
	
	{\it Case 2. $n>2$.} Since $a$ is separable over $F$, so is $F(a)$. According to \cite[Theorem (15.12)]{Bo_La_91}, there exists a maximal subfield $K$ of $D$ which contains $F(a)$ and is separable over $F$. By Remark~\ref{r1}, $D$ is an $F$-subalgebra of $\M_n(K)$. We first claim that there exists $v\in \GL_n(K)$ and $u\in \mathrm{M}_n(K)$ such that $ava^{-1}v^{-1}$ and $au-ua$ are algebraic over $F$ of degree $n$. Assume that the minimal polynomial of $a$ over $F$ is $$p_a(t)=a_0+a_1t+\dots + a_{d-1}t^{d-1}+t^d.$$  
Because the extension $K/F$ is finite and separable, there exists $b\in K$ such that $K=F(b)$ (see e.g. \cite[Corollary 5.7]{Bo_morandi_96}). By \cite[Theorem (15.8)]{Bo_La_91}, $\dim_FK=n$, so the minimal polynomial of $b$ over $F$ is of degree $n$, namely, $$p_b(t)=b_0+b_1t+\dots+b_{n-1}t^{n-1}+t^{n}.$$ In view of Remark~\ref{r1} (2), $p_b(t)$ is also the minimal polynomial of $b$ over  $K$.  Consider the Frobenius companion matrix $$c=C_{p_{b}}=\sum_{i=1}^{n-1} e_{i+1,i} -\sum_{i=1}^n {b_{i-1} e_{i,n}}=\begin{bmatrix}
	0 & 0 & 0&  \dots & 0& -b_0 \\
	1 & 0 &  0&\dots & 0& -b_1\\
	0 & 1 &  0&\dots & 0&-b_2\\
	\vdots & \vdots & \ddots & & \vdots \\
	0 & 0 &  0&\dots & 0&-b_{n-2}\\
	0 & 0 &0 & \dots & 1& -b_{n-1}\\
	\end{bmatrix} $$ of $p_{b}(t)$. By Remark~\ref{r2}, there exists $P\in \GL_n(K)$ such that $c=P^{-1}b P$. Consider the subring $\mathrm M_{n}(F)$ of $\mathrm \M_n(K)$. Note that $c\in \mathrm M_{n}(F)$. Let $L=F[c]$ be the subring of $\M_n(F)$ generated by $c$ over $F$. Since the minimal polynomial of $c$ is $p_c(t)=p_b(t)$ which is irreducible of degree $n$ over $F$, by Lemma~\ref{l3.1}, $L$ is a subfield of $\mathrm M_{n}(F)$ and $[F(c):F]=n$. Observe that $a\in F[a]\subseteq F[b]$, so $$P^{-1}aP\in P^{-1}F[a]P\subseteq P^{-1}F[b]P=F[P^{-1} b P]=F[c]=L.$$ For convenience, put $\delta=P^{-1}aP$. The minimal polynomials of $\delta$ and $a$ coincide, so $p_{\delta}(t)=p_{a}(t)=a_0+a_1t+\dots+a_{d-1}t^{d-1}+t^d$. Hence, if $C_{p_{\delta}}$ is the Frobenius companion matrix of $p_{\delta}$, then, by Lemma~\ref{l3.3}, there exists a basis $\Cal B$ such that the corresponding matrix $A_\delta$ of $\delta$ is similar to $$E=C_{p_{A}}\oplus C_{p_{A}}\oplus \dots \oplus C_{p_{A}}$$ ($n/d$ times) in $\mathrm \M_n(F)$. 
	Combining this fact with Remark~\ref{r3}, one has $\delta\sim A_\delta\sim E$ in $\M_n(F)$.  In conclusion, we have $$a\sim \delta\sim A_\delta\sim E$$ in $\mathrm \M_n(K)$; that is,
	 there exists $Q\in \GL_n(K)$ such that $E =Qa Q^{-1}$. By Lemma~\ref{l3.4}, there exist $G\in \GL_n(K)$ and $H\in \mathrm{M}_n(K)$ such that $EGE^{-1}G^{-1}$ and $EH-HE$ are algebraic over $F$ of degree $n$. Put $v=Q^{-1}GQ$ and $u=Q^{-1}H Q$. Then, $$ava^{-1}v^{-1}=a(Q^{-1}GQ)a^{-1}(Q^{-1}GQ)^{-1}$$$$=Q^{-1}(QaQ^{-1}) G (QaQ^{-1})^{-1}G^{-1})Q=Q^{-1} (EGE^{-1}G^{-1})Q,$$ which is algebraic over $F$ of degree $n$. Similarly, $$au-ua=(Q^{-1}EQ)(Q^{-1}HQ)-(Q^{-1}HQ)(Q^{-1}EQ)=Q^{-1}(EH-HE)Q,$$ which is also algebraic of degree $n$ over $F$. The claim is shown.
	 
	 Now, we show that there exist $b,c\in D^*$ such that $aba^{-1}b^{-1}$ and $ac-ca$ are algebraic over $F$ of degree $n$. We first work with the multiplicative commutator $aba^{-1}b^{-1}$. Assume that $axa^{-1}x^{-1}$ is algebraic over $F$ of  bounded degree $s$ for every $x\in D^*$. It is easy to see that $s\le n$. Put $$f(x,y_1,y_2,\dots,y_s)=g_s(axa^{-1}x^{-1}, y_1,y_2,\dots,y_s)$$ where $g_s$ is a generalized  Laurent polynomial we have defined in (1). Then $f$ is non-zero in $\mathrm \M_n(K)_F \left \langle  x,y_1,\dots,y_n\right\rangle $ as it can be seen in \cite[Page 159]{Pa_ChFoLe_04} (or see a more general result in \cite{Pa_HaDuBi_17}). Since $a\alpha a^{-1}\alpha ^{-1}$ is algebraic of degree $\le s$ over $F$ for every $\alpha\in D^*$, by Lemma~\ref{l2.2}, $$f(\alpha,\beta_1,\beta_2,\dots,\beta_s)=g_s(a\alpha a^{-1}\alpha^{-1}, \beta_1,\beta_2,\dots,\beta_s)=0$$ for every $\alpha\in D^*$ and $\beta_i\in D$, which implies that $f=0$ is a generalized Laurent identity of $D$. By Lemma~\ref{l2.3}, $f=0$ is also a generalized Laurent identity of $\mathrm \M_n(K)$; that is, $$g_s(a\alpha a^{-1}\alpha^{-1}, \beta_1,\beta_2,\dots,\beta_s)=f(\alpha,\beta_1,\beta_2,\dots,\beta_s)=0$$ for every $\alpha\in \GL_n(K)^*$ and $ \beta_i\in\mathrm \M_n(K)$. It implies that $a\alpha a^{-1}\alpha^{-1}$ is algebraic of degree $\le s$ for every $\alpha \in \GL_n(K)$. Moreover, $ava^{-1}v^{-1}$ is algebraic over $F$ of degree $n$ by the previous part, so $n\le s$. Thus, $s=n$; that is, there exists $b\in D^*$ such that $aba^{-1}b^{-1}$ is algebraic over $F$ of degree $n$. 
	 
	 Finally, consider the case of  additive commutators. Assume that $ax-xa$ is algebraic over $F$ of  bounded degree $s$ for every $x\in D^*$. Using the same arguments as  above with $$f(x,y_1,y_2,\dots,y_s)=g_s(ax-xa, y_1,y_2,\dots,y_s),$$ we have $s=n$; that is, there exists $c\in D^*$ such that $ac-ca$ is algebraic over $F$ of degree $n$.
	 \end{proof}
	
\section{Algebraicity of bounded degree over the center}	
Using the results of previous sections, here we give the generalizations of some results previously obtained by other authors. As we have mentioned in the introduction, the classical Jacobson theorem \cite[Theorem 7]{Pa_Ja_45} asserted that if a division ring $D$ is algebraic of bounded degree over its center $F$, then $D$ must be centrally finite; that is, $[D:F]<\infty$. In the following theorem, we successfully evaluate this dimension with the weaker assumption on $D$, namely, it is required only the existence of a noncentral normal subgroup of $D^*$ which is algebraic of bounded degree $d$ over $F$. 

\begin{theorem}\label{c4.3} Let $D$ be a division ring with center $F$. Assume that $N$ is a noncentral normal subgroup of $D^*$. If  $N$ is algebraic of bounded degree $d$ over $F$, then $[D:F]\le d^2$.
\end{theorem}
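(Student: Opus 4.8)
The plan is to prove the theorem in two stages: first to show that $D$ is centrally finite, and then to use Theorem~\ref{t3.3} to turn bounded algebraicity of a suitable commutator into the estimate $[D:F]\le d^2$.

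\emph{Step 1: reduce to the centrally finite case.} I would first note that, since $N$ is noncentral, $D$ is noncommutative, and that one may assume $F$ is infinite --- if $F$ were finite, then $N$, being algebraic over $F$, would be a torsion normal subgroup of $D^*$, hence central, contradicting the hypothesis. Fix a noncentral element $a\in N$. For every $x\in D^*$, normality gives $xa^{-1}x^{-1}\in N$, and hence $axa^{-1}x^{-1}=a\cdot(xa^{-1}x^{-1})\in N$, so this multiplicative commutator is algebraic over $F$ of degree at most $d$. By Lemma~\ref{l2.2} applied with matrix degree $1$, this says that $g_d(axa^{-1}x^{-1},b_1,\dots,b_d)=0$ for all $b_1,\dots,b_d\in D$; letting $x$ run over $D^*$ we obtain that $f(x,y_1,\dots,y_d):=g_d(axa^{-1}x^{-1},y_1,\dots,y_d)$ is a generalized Laurent identity of $D$. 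Because $a\notin F$, the generalized Laurent polynomial $f$ is nonzero in $D_F\langle x,y_1,\dots,y_d\rangle$; this is exactly the nonvanishing already used in the proof of Theorem~\ref{t3.3}, established in \cite[Page~159]{Pa_ChFoLe_04} (see also \cite{Pa_HaDuBi_17}). So $D$ satisfies a nontrivial generalized Laurent identity, and the Amitsur--Bergman theorem (\cite[Theorem~8.2.15]{Bo_Rowen_80}; compare Lemma~\ref{l2.3}) forces $[D:F]<\infty$; write $[D:F]=n^2$.

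\emph{Step 2: the estimate.} Now $N$ is a noncentral subnormal subgroup of the centrally finite division ring $D$, so Theorem~\ref{t3.3} provides $a'\in N$ and $b\in D^*$ such that $F(a'ba'^{-1}b^{-1})$ is a maximal subfield of $D$; by \cite[Theorem~(15.8)]{Bo_La_91} this subfield has degree $n$ over $F$. On the other hand $ba'^{-1}b^{-1}\in N$ by normality, so $a'ba'^{-1}b^{-1}=a'\cdot(ba'^{-1}b^{-1})\in N$ is algebraic over $F$ of degree at most $d$, i.e.\ $[F(a'ba'^{-1}b^{-1}):F]\le d$. Comparing, $n\le d$, whence $[D:F]=n^2\le d^2$.

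\emph{Main obstacle.} The delicate point is Step~1: Jacobson's theorem cannot be invoked directly, because only the subgroup $N$ --- not all of $D$ --- is assumed algebraic of bounded degree. The device that makes it work is that normality keeps the conjugates $axa^{-1}x^{-1}$ inside $N$, so feeding them into the Capelli-type polynomial $g_d$ of Lemma~\ref{l2.2} and letting $x$ vary promotes the pointwise degree bound on $N$ to a genuine generalized Laurent identity of $D$; the only part of this that is not purely formal is the nonvanishing of $f$, which is precisely what is recorded in \cite{Pa_ChFoLe_04} and \cite{Pa_HaDuBi_17}. Once $D$ is centrally finite, Theorem~\ref{t3.3} --- proved above exactly in that setting --- makes Step~2 immediate. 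Note that here only the multiplicative commutator of Theorem~\ref{t3.3} is usable, since it is the one that lies in the subgroup $N$.
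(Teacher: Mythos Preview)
Your proof is correct and follows the paper's two-step architecture (first prove $D$ is centrally finite, then apply Theorem~\ref{t3.3} to a multiplicative commutator lying in $N$). Step~2 is identical to the paper's.

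The only real difference is in Step~1. The paper observes directly that $g_d(a,b_1,\dots,b_d)=0$ for all $a,b_1,\dots,b_d\in N$, so $g_d=0$ is a (polynomial) identity of the subgroup $N$ itself, and then invokes Chiba's theorem \cite{Pa_Ch_96} on generalized rational identities of subnormal subgroups to conclude $[D:F]<\infty$. You instead fix a noncentral $a\in N$, use normality to place $axa^{-1}x^{-1}$ in $N$ for every $x\in D^*$, and thereby promote the degree bound to a generalized Laurent identity $f(x,y_1,\dots,y_d)=g_d(axa^{-1}x^{-1},y_1,\dots,y_d)$ of the whole of $D$, after which Amitsur's theorem on rational identities finishes. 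Both routes are legitimate: the paper's is shorter because $g_d$ is visibly a nonzero polynomial and Chiba's result applies directly to $N$; yours trades Chiba's subnormal-subgroup theorem for the classical Amitsur theorem, at the price of needing the nonvanishing of $f$ from \cite[p.~159]{Pa_ChFoLe_04}. One small point: your parenthetical ``compare Lemma~\ref{l2.3}'' is misleading, since Lemma~\ref{l2.3} only records the inclusion $\mathcal{I}(D)\subseteq\mathcal{I}(\mathrm{M}_n(K))$ once $[D:F]=n^2$ is already known; the implication you actually need (nontrivial generalized rational identity over an infinite center $\Rightarrow$ $[D:F]<\infty$) is the full Amitsur--Bergman theorem, not that lemma.
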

\begin{proof} If $F$ is finite, then for every element $a\in N$, the subfield $F(a)$ is  finite,  which implies that $a$ is an element of finite order in $D^*$. In view of \cite[Theorem 8]{Pa_He_78}, $N$ is central that is a contradiction. Hence, $F$ is infinite. Let $g_d(x,y_1,y_2,\dots,y_d)$ be the generalized Laurent polynomial which is defined in (\ref{e1}). By Lemma~\ref{l2.2},  it follows in particular that $g_d(a,b_1,b_2,\dots,b_d)=0$ for every $a, b_1,b_2,\dots,b_d\in N$. Hence, $g_d=0$ is a generalized Laurent identity of $N$. By \cite{Pa_Ch_96}, $D$ is centrally finite, say,  of dimension $[D:F]=n^2$. By Theorem~\ref{t3.3}, there exist $a\in N$ and $u\in D^*$ such that $F(aua^{-1}u^{-1})$ is a maximal subfield of $D$. According to \cite[Theorem (15.8)]{Bo_La_91}, $[F(aua^{-1}u^{-1}):F]=n$, which implies that $aua^{-1}u^{-1}$ is algebraic of degree $n$ over $F$. Since $aua^{-1}u^{-1}=a(ua^{-1}u^{-1})\in N$ is algebraic of bounded degree $d$ over $F$,  $n\le d$ which implies that $[D:F]\le d^2$. 
\end{proof}

The next result can be considered as a generalization of \cite[Theorem 6]{Pa_ChFoLe_04}.
\begin{theorem}\label{th:4.2} Let $D$ be a division ring with center $F$. Assume that $N$ is a noncentral subnormal subgroup of $D^*$. If all multiplicative commutators of the form $aba^{-1}b^{-1}$, where $a\in N$ and $ b\in D^*$, are algebraic over $F$ of bounded degree $d$,  then $[D:F]\le d^2$.
\end{theorem}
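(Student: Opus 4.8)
The plan is to reduce Theorem~\ref{th:4.2} to the normal case already handled by Theorem~\ref{c4.3}, via the standard bridge between subnormal subgroups and normal subgroups for division rings. First I would observe that $N$ being noncentral and subnormal in $D^*$ forces $N$ to generate a large subring: by a theorem of Stuth (or via the commutator identities used throughout this circle of papers), any noncentral subnormal subgroup of $D^*$ is noncentral and the division subring it generates is all of $D$. More importantly, I would try to extract from the hypothesis a \emph{generalized Laurent identity} on a normal subgroup. The key point is the one exploited in the proof of Theorem~\ref{c4.3}: if every multiplicative commutator $aba^{-1}b^{-1}$ with $a\in N$, $b\in D^*$ is algebraic of degree $\le d$, then by Lemma~\ref{l2.2} the polynomial
\[
f(x,y_1,\dots,y_d)=g_d\bigl(axa^{-1}x^{-1},\,y_1,\dots,y_d\bigr)
\]
vanishes identically when $x$ ranges over $D^*$ and the $y_i$ over $D$ (for each fixed $a\in N$), and $f$ is a nonzero generalized Laurent polynomial by the computation in \cite[Page 159]{Pa_ChFoLe_04}. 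Hence $D$ satisfies a nontrivial generalized Laurent identity, and by \cite{Pa_Ch_96} $D$ is centrally finite, say $[D:F]=n^2$.

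Once $D$ is known to be centrally finite, I would invoke Theorem~\ref{t3.3} directly: since $N$ is a noncentral subnormal subgroup of $D^*$, there exist $a\in N$ and $b\in D^*$ with $F(aba^{-1}b^{-1})$ a maximal subfield of $D$. By \cite[Theorem (15.8)]{Bo_La_91} this gives $[F(aba^{-1}b^{-1}):F]=n$, so $aba^{-1}b^{-1}$ is algebraic of degree exactly $n$ over $F$. But $aba^{-1}b^{-1}$ is one of the commutators in the hypothesis (with $a\in N$, $b\in D^*$), hence algebraic of degree $\le d$; therefore $n\le d$ and $[D:F]=n^2\le d^2$, as desired. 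In fact this argument is essentially identical to the proof of Theorem~\ref{c4.3} once one has centrally finiteness — the only difference is that $N$ is now merely subnormal rather than normal, which is already accommodated by the statement of Theorem~\ref{t3.3}.

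The two points that need care are: (i) verifying that the generalized Laurent identity $f=0$ genuinely holds on all of $D^*$ rather than just on $N$ — here one uses that the hypothesis is ``for all $a\in N$ and all $b\in D^*$'', so fixing any single noncentral $a\in N$ already yields an identity of $D$ (one must check $a$ noncentral exists, i.e. $N\not\subseteq F$, which is given); and (ii) handling the case where $F$ is finite, which must be excluded exactly as in Theorem~\ref{c4.3}: if $F$ is finite then for $a\in N$ the field $F(aba^{-1}b^{-1})$ is finite so $aba^{-1}b^{-1}$ is a root of unity, and one argues (via \cite[Theorem 8]{Pa_He_78} applied after passing to a suitable normal subgroup, or directly via a torsion-by-finite argument) that this forces $N$ central. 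I expect the main obstacle to be making the reduction in point (i) airtight: one needs a \emph{fixed} noncentral $a\in N$ for which the substitution $x\mapsto$ (arbitrary element of $D^*$) in $g_d(axa^{-1}x^{-1},\dots)$ produces a nonzero generalized Laurent polynomial in the remaining variables, and then confirm that \cite{Pa_Ch_96} applies to conclude central finiteness from a single such identity; the rest is a direct transcription of the proof of Theorem~\ref{c4.3}.
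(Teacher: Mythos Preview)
Your main line of argument---once $F$ is known to be infinite---is correct and coincides with the paper's: you establish central finiteness and then invoke Theorem~\ref{t3.3} to produce a commutator $aba^{-1}b^{-1}$ (with $a\in N$, $b\in D^*$) of degree exactly $n$, giving $n\le d$. For central finiteness you derive a generalized Laurent identity and cite Chiba \cite{Pa_Ch_96}; the paper instead cites \cite[Theorem~11]{Pa_AaAkBi_18}, which already packages this conclusion. Either route works, and your care about point~(i) (that $f$ is a nonzero element of $D_F\langle x,y_1,\dots,y_d\rangle$) is well placed---the paper addresses exactly this in the proof of Theorem~\ref{t3.3} via \cite[p.~159]{Pa_ChFoLe_04}.

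The genuine gap is your handling of the finite-$F$ case. You propose to argue ``exactly as in Theorem~\ref{c4.3}'' via \cite[Theorem~8]{Pa_He_78}, but that theorem requires every \emph{element} of the subnormal subgroup to be torsion. In Theorem~\ref{c4.3} this is available because $N$ itself is algebraic over a finite field; here you only know that the \emph{commutators} $aba^{-1}b^{-1}$ are torsion, not the elements of $N$, so Herstein's result does not apply, and ``passing to a suitable normal subgroup'' does not obviously help (the subgroup generated by torsion commutators need not be torsion, nor evidently subnormal). The paper's argument for finite $F$ is genuinely different and more delicate: one first locates a noncentral commutator $c=a\alpha a^{-1}\alpha^{-1}\in N$ (using Stuth \cite{Pa_St_64} to rule out $[N,N]\subseteq F$), then invokes \cite[Proposition~2.2]{Pa_BiDu_14} to place $c$ as a noncentral element inside a centrally finite division subring $D_1\subseteq D$; since $c\in N$, every commutator $cxc^{-1}x^{-1}$ with $x\in D_1^*$ is torsion, and \cite[Corollary~2.10]{Pa_Bi_16} then forces $c\in Z(D_1)$, a contradiction. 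You should replace your sketch for point~(ii) with this argument.
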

\begin{proof} We claim that $F$ is infinite. Assume by contrary that $F$ is finite. For every $x\in D^*$ and $a\in N$, since $axa^{-1}x$ is algebraic over $F$, the subfield $F(axa^{-1}x^{-1})$ of $D$ is finite. Consequently, $axa^{-1}x^{-1}$ is of finite order in $D^*$. Now, if $ax a^{-1}x^{-1}\in F$ for every $a, x\in N$, then $N$ is soluble, which implies that $N$ is central (e.g., see \cite[Theorem 6 (iii)]{Pa_St_64}), a contradiction. Hence, there exists $\alpha\in N$ such that $a\alpha a^{-1}\alpha^{-1}\not\in F$. Since $F$ is finite and $a\alpha a^{-1}\alpha^{-1}$ is algebraic over $F$,  the subfield $F(a\alpha a^{-1}\alpha^{-1})$ of $D$ is finite. Consequently, $a\alpha a^{-1}\alpha^{-1}$ is of finite order in $D^*$. By \cite[Proposition 2.2]{Pa_BiDu_14}, there exists a centrally finite division subring $D_1$ of $D$ such that $D_1$ contains $a\alpha a^{-1}\alpha^{-1}$ as a noncentral element. Moreover, the fact that $a\alpha a^{-1}\alpha^{-1}\in N$ implies that for every $x\in D_1^*$, the commutator $a\alpha a^{-1}\alpha^{-1} x (a\alpha a^{-1}\alpha^{-1})^{-1}x^{-1}$ is of finite order. In view of  \cite[Corollary 2.10]{Pa_Bi_16}, $a\alpha a^{-1}\alpha^{-1}$ belongs to the center $ Z(D_1)$ of $D_1$, a contradiction. Thus, the claim is shown; that is, $F$ is infinite. Now by \cite[Theorem 11]{Pa_AaAkBi_18}, $[D:F]=n^2$. According to Theorem~\ref{t3.3}, there exist $a\in N$ and $b\in D^*$ such that $aba^{-1}b^{-1}$ is algebraic of degree $n$ over $F$. Hence, $n\le d$, which implies that $[D:F]\le d^2$. 
\end{proof}

The following theorem on additive commutators generalizes \cite[Theorem 2]{Pa_AkArMe_98}. 
\begin{theorem}\label{th:4.3} Let $D$ be a division ring with center $F$. Assume that $N$ is a noncentral subnormal subgroup of $D^*$. If all additive commutators of the form $ac-ca$, where $a\in N$ and $ c\in D$ are algebraic over $F$ of bounded degree $d$, then $[D:F]\le d^2$.
\end{theorem}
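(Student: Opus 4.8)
The plan is to run the same three-step argument as in the proofs of Theorems~\ref{c4.3} and \ref{th:4.2}, with the ``multiplicative'' half of Theorem~\ref{t3.3} replaced by its ``additive'' half: first show that $D$ is centrally finite, say $[D:F]=n^2$; then use Theorem~\ref{t3.3} to produce a single additive commutator $ac-ca$, with $a\in N$ and $c\in D^*$, that generates a maximal subfield of $D$ and is hence algebraic over $F$ of degree exactly $n$; and finally apply the hypothesis to conclude $n\le d$, so that $[D:F]=n^2\le d^2$.

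For the first step, fix a noncentral element $a\in N$ (it exists because $N$ is noncentral). For every $c\in D$ the element $ac-ca$ is algebraic over $F$ of degree $\le d$, so Lemma~\ref{l2.2} yields $g_d(ac-ca,b_1,\dots,b_d)=0$ for all $b_1,\dots,b_d\in D$; that is, $D$ satisfies the generalized Laurent identity $g_d(ax-xa,y_1,\dots,y_d)=0$. Exactly as in the proof of Theorem~\ref{t3.3} (cf. \cite[Page 159]{Pa_ChFoLe_04} and \cite{Pa_HaDuBi_17}), this identity is \emph{nontrivial}, i.e. $g_d(ax-xa,y_1,\dots,y_d)\ne 0$ in the free product $D*_F F\langle x,y_1,\dots,y_d\rangle$, precisely because $a\notin F$ and hence the inner derivation $x\mapsto ax-xa$ of $D$ is nonzero. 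Since a division ring satisfying a nontrivial generalized Laurent identity is centrally finite, we obtain $[D:F]=n^2<\infty$. As in Theorems~\ref{c4.3} and \ref{th:4.2}, $F$ is then automatically infinite, because a centrally finite division ring with finite center would be a finite field by Wedderburn's theorem, forcing $N\subseteq F$; moreover $n\ge 2$, since $n=1$ would give $N\subseteq D^*=F^*$.

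For the last two steps, apply Theorem~\ref{t3.3} to the centrally finite division ring $D$ and the noncentral subnormal subgroup $N$: there exist $a\in N$ and $c\in D^*$ such that $F(ac-ca)$ is a maximal subfield of $D$. By \cite[Theorem (15.8)]{Bo_La_91}, $[F(ac-ca):F]=n$, so $ac-ca$ is algebraic over $F$ of degree $n$; on the other hand $c\in D^*\subseteq D$, so by hypothesis $ac-ca$ is algebraic over $F$ of degree at most $d$. Hence $n\le d$, and therefore $[D:F]=n^2\le d^2$. I expect the crux of the proof to be the first step, namely the nontriviality of the generalized Laurent identity $g_d(ax-xa,y_1,\dots,y_d)=0$ and the resulting central finiteness of $D$; this is exactly the additive analogue of the nonvanishing statement used for multiplicative commutators in the proof of Theorem~\ref{t3.3}, and it is where the generalized-identity machinery recalled in Section~2 (and developed in \cite{Pa_ChFoLe_04,Pa_HaDuBi_17}) is genuinely needed. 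Once central finiteness is in hand, the conclusion follows at once from Theorem~\ref{t3.3}, just as in Theorems~\ref{c4.3} and \ref{th:4.2}; alternatively, one could reach it by first proving $F$ infinite via a periodicity argument and then invoking a subnormal-subgroup extension of \cite[Theorem 2]{Pa_AkArMe_98}.
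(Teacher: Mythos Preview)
Your argument is correct and follows the same two-step strategy as the paper: first establish that $D$ is centrally finite, then invoke Theorem~\ref{t3.3} to produce an additive commutator $ac-ca$ (with $a\in N$, $c\in D^*$) of degree exactly $n=\sqrt{[D:F]}$, forcing $n\le d$.

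The only difference lies in the first step. You obtain central finiteness uniformly from the single nontrivial identity $g_d(ax-xa,y_1,\dots,y_d)=0$, whereas the paper argues by cases: when $F$ is finite it observes that each $ax-xa$ lies in a finite field of bounded size, so $(ax-xa)^{m!}=1$ is a nontrivial GPI and \cite[Theorem~5]{Pa_Ma_69} applies; when $F$ is infinite it quotes \cite[Theorem~18]{Pa_AaAkBi_18} directly. Your route is a bit more streamlined and stays closer to the machinery already set up in Section~2. One small point of precision: since $g_d(ax-xa,y_1,\dots,y_d)$ contains no inverses, it is a generalized \emph{polynomial} identity, and it is Martindale's GPI theorem \cite{Pa_Ma_69} (rather than the Laurent/rational-identity results, which require $F$ infinite) that yields central finiteness without any case distinction---so your appeal should be to that theorem specifically.
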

\begin{proof}
	We first claim that $D$ is centrally finite. Take an element $a\in N\backslash F$. Then, for every $x\in D^*$, the additive commutator $ax-xa$ is algebraic over $F$ of degree $d$. If $F$ is finite, then the subfield $F(ax-xa)$ of $D$ is finite whose cardinality is less than or equal to $|F|^d$, which implies that  $(ax-xa)^{m!}=1$, where  $m=|F|^d$. Hence, $D$ satisfies a generalized polynomial identity $(ax-xa)^{m!}=1$. It is well known (e.g., see the original one \cite[Theorem 5]{Pa_Ma_69}) that  $D$ is centrally finite. If $F$ is infinite, then $D$ is centrally finite by \cite[Theorem 18]{Pa_AaAkBi_18}. Two these cases lead us to the conclusion that $D$ is centrally finite. Suppose that $[D:F]=n^2$. By Theorem~\ref{t3.3}, there exist $a\in N$ and $c\in D^*$ such that $ac-ca$ is algebraic over $F$ of degree $n$. Thus, $n\le d$, and $[D:F]\le d^2$. 
\end{proof}

\section{Algebraicity of bounded degree over a subfield}

Let $D$ be a division ring with center $F$ and $K$ any subfield of $D$ which is not necessarily central; that is, $K$ is not necessarily contained in $F$. This section is devoted to the study of left algebraicity over $K$. The main result we get in Theorem~ \ref{left algebraic} is very general. In particular, it is an extension  of Theorem~\ref{c4.3}.

\begin{lemma}\label{lem:5.1} Let $D$ be a division ring, $K$ a subfield of $D$ and $x$ an element in $D$ which is left algebraic over $K$.  Then, $x$ is left algebraic of degree $d$ over $K$ if and only if $d$ is the biggest positive integer such that  $\{1,x, \dots,x^{d-1}\} $ is left independent over $K$. 
\end{lemma}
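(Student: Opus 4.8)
The plan is to prove the equivalence by unwinding the definition of left algebraic degree and relating it to left linear independence of powers of $x$ over $K$. Recall that by definition, $x$ is left algebraic of degree $d$ over $K$ means there is a nonzero polynomial $f(t) = a_d t^d + \dots + a_1 t + a_0 \in K[t]$ with $f(x) = a_d x^d + \dots + a_1 x + a_0 = 0$, and $d$ is the smallest degree for which such a polynomial exists (equivalently, the degree of the left minimal polynomial). I want to show this is the same as saying $d$ is the largest integer with $\{1, x, \dots, x^{d-1}\}$ left independent over $K$, i.e. the first value at which $\{1, x, \dots, x^{d}\}$ becomes left dependent.

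First I would prove the forward direction. Suppose $x$ is left algebraic of degree $d$, with left minimal polynomial $f(t) = t^d + a_{d-1}t^{d-1} + \dots + a_0$ (monic, smallest degree). Then the relation $x^d = -a_{d-1}x^{d-1} - \dots - a_0$ is a nontrivial left $K$-dependence among $\{1, x, \dots, x^d\}$, so this set is left dependent. It remains to check $\{1, x, \dots, x^{d-1}\}$ is left independent: if $c_{d-1}x^{d-1} + \dots + c_1 x + c_0 = 0$ with not all $c_i$ zero, then letting $j$ be the largest index with $c_j \neq 0$, the polynomial $c_j t^j + \dots + c_0$ is a nonzero polynomial of degree $j \le d-1$ having $x$ as a right root, contradicting minimality of $d$. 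Hence $d$ is indeed the largest integer with $\{1, \dots, x^{d-1}\}$ left independent.

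Conversely, suppose $d$ is the largest positive integer such that $\{1, x, \dots, x^{d-1}\}$ is left independent over $K$. Then $\{1, x, \dots, x^d\}$ is left dependent, so there exist $c_0, \dots, c_d \in K$, not all zero, with $c_d x^d + \dots + c_0 = 0$. I claim $c_d \neq 0$: otherwise this would be a nontrivial dependence among $\{1, \dots, x^{d-1}\}$, contradicting the assumed independence. So $g(t) = c_d t^d + \dots + c_0$ is a nonzero polynomial of degree exactly $d$ with $g(x) = 0$, showing $x$ is left algebraic of degree $\le d$. And no polynomial of degree $< d$ can have $x$ as a right root, since that would yield a nontrivial left dependence among a subset of $\{1, \dots, x^{d-1}\}$; thus $x$ is left algebraic of degree exactly $d$. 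This completes the equivalence.

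This lemma is essentially a bookkeeping statement, so I do not anticipate a genuine obstacle; the only point requiring a little care is that polynomial coefficients sit on the left, so when passing between a dependence relation and a polynomial one must keep the order $a_i x^i$ fixed and not accidentally commute scalars past $x$. I would state this explicitly once at the start so that the substitutions $f(x) = \sum a_i x^i$ are read correctly throughout. The argument uses nothing beyond the definitions of left algebraicity, left minimal polynomial, and left linear independence as recalled in Section 2.
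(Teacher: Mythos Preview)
Your proof is correct and follows essentially the same approach as the paper's own proof: both directions are handled by directly translating between a left $K$-linear dependence among $\{1,x,\dots,x^k\}$ and the existence of a nonzero polynomial of degree $\le k$ with $x$ as a right root, and both invoke minimality of $d$ in the same way. Your write-up is slightly more explicit (isolating the top index $j$ in the forward direction, and arguing $c_d\ne 0$ in the converse), but there is no substantive difference.
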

\begin{proof}
			
Assume that $x$ is left algebraic over $K$ of degree $d$. Then, there exist some  elements $a_0, a_1,\dots, a_d\in K$ with $a_d\ne 0$ such that $a_0+a_1x+\dots+a_dx^d=0$. Hence,  $\{1,x, \dots,x^{n}\} $ is left dependent over $K$ for every $n\ge d$. If $\{1,x, \dots,x^{d-1}\}$ is left dependent over $K$, then there exist $a_0, a_1,\dots, a_d\in K$ not all are zeros such that $a_0+a_1x+\dots+a_{d-1}x^{d-1}=0$. This implies that $x$ is left algebraic of degree $<d$. Therefore, $d$ is the biggest positive integer such that $\{1,x, \dots,x^{d-1}\} $ is left independent over $K$.	Conversely, assume that $d$ is the biggest positive integer such that $\{1,x,\dots,x^{d-1}\}$ is left independent over  $K$. Then, $x$ is left algebraic of degree $\ge d$ over $K$. Moreover, since $\{1,x,\dots,x^{d}\}$ is left dependent over  $K$, there exist $a_0, a_1,\dots, a_d\in K$ with $a_d\ne 0$ such that $a_0+a_1x+\dots+a_dx^d=0$, which implies that $x$ is left algebraic of degree $d$ over $K$.  
\end{proof}

\begin{lemma}\label{lem:5.2} Let $K$ be a field and $R$ a ring containing $K$ as a subring. If $R$ is a domain and left algebraic over $K$, then $R$ is a division ring. 
\end{lemma}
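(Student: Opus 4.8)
The plan is to prove directly that every nonzero element of $R$ is a unit. Fix $a\in R$ with $a\ne 0$. Since $a$ is left algebraic over $K$ and $K$ is a field, one can choose a monic polynomial $f(t)=t^{d}+a_{d-1}t^{d-1}+\dots+a_{1}t+a_{0}\in K[t]$ of smallest possible degree $d$ with $f(a)=a^{d}+a_{d-1}a^{d-1}+\dots+a_{1}a+a_{0}=0$; since $1\ne 0$ in the domain $R$ and $a\ne 0$, necessarily $d\ge 1$. Set $h(t)=t^{d-1}+a_{d-1}t^{d-2}+\dots+a_{2}t+a_{1}\in K[t]$, so that $f(t)=h(t)\,t+a_{0}$. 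Evaluating at $a$ while keeping each scalar $a_{i}$ on the left of the corresponding power of $a$ — this is legitimate because one only ever factors a power of $a$ out on the \emph{right} and never commutes an $a_{i}$ past $a$, so no centrality assumption on $K$ is used — yields the relation $h(a)\,a=f(a)-a_{0}=-a_{0}$.

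The first step is to rule out $a_{0}=0$. If $a_{0}=0$, then $h(a)\,a=0$; since $R$ is a domain and $a\ne 0$, this forces $h(a)=0$. But $h$ is a nonzero (monic) polynomial of degree $d-1<d$, contradicting the minimal choice of $d$. Hence $a_{0}\in K\setminus\{0\}$, so $a_{0}$ is invertible inside the field $K\subseteq R$.

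The second step extracts the inverse. From $h(a)\,a=-a_{0}$ we get $\bigl(-a_{0}^{-1}h(a)\bigr)a=1$, so $b:=-a_{0}^{-1}h(a)\in R$ is a left inverse of $a$. Finally, in a domain a one-sided inverse is automatically two-sided: from $ba=1$ we obtain $(ab-1)a=a(ba)-a=a-a=0$, and cancelling $a\ne 0$ gives $ab=1$. Therefore $a$ is a unit of $R$, and as $a$ was an arbitrary nonzero element, $R$ is a division ring.

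The argument is short and I do not expect a genuine obstacle; the only points requiring attention are the two invocations of the domain hypothesis — once to pass from $h(a)\,a=0$ to $h(a)=0$ in the minimality step, and once to promote the left inverse $b$ to a genuine two-sided inverse — together with the bookkeeping, necessitated by the fact that $K$ is not assumed to lie in the center, of arranging every manipulation so that scalars are factored out only on the right.
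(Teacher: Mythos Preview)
Your proof is correct and follows essentially the same approach as the paper: take the left minimal polynomial of a nonzero element, argue that its constant term is nonzero, and read off an inverse. Your bookkeeping is in fact more careful than the paper's version---by factoring $a$ out on the \emph{right} you legitimately obtain a left inverse without ever commuting a coefficient past $a$, whereas the paper factors $a$ on the left (and asserts irreducibility of the minimal polynomial to get $a_{0}\ne 0$) in a way that tacitly treats $K$ as central.
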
	
\begin{proof}	
For any $a\in R\backslash \{0\}$, let $f(t)=t^n+a_{n-1}t^{n-1}+\cdots+a_0$ be the left minimal polynomial of $a$ over $K$. Since $f(t)$ is irreducible over $K$, $a_0\ne 0$, and it follows that  
$$a(a^{n-1}+a_{n-1}a^{n-2}+\cdots+a_1)(-a_0^{-1})=1. $$
Thus, we have shown that every non-zero element of $R$ is right invertible, and this implies that every non-zero element of $R$ is invertible. 
\end{proof}

Let $K$ be a field and $R$ a ring containing $K$ as a subring. For an element $a\in R$, if $a$ is left algebraic (resp. right algebraic) over $K$, then $\mathrm{ldeg}_K(a)$ (resp., $\mathrm{rdeg}_K(a)$) is denoted by the degree of the left minimal polynomial (resp., the right minimal polynomial) of $a$ over $K$. In case $K$ is central, this degree is denoted simply by $\deg_K(a)$. Assume that $S$ is a non-empty subset of $R$ which is left algebraic of bounded degree over $K$. Put $\mathrm{ldeg}_K(S)=\max\{\mathrm{ldeg}_K(a)\mid a\in S \}$ and, additionally, if $K$ is central, then put $\mathrm{deg}_K(S)=\max\{\mathrm{deg}_K(a)\mid a\in S \}$. The following lemma plays an important role in this section.
\begin{lemma}\label{l5.2}
	Let $D$ be a centrally finite division ring with center $F$ and $K$ a maximal subfield of $D$. Assume that $N$ is a noncentral normal subgroup of $D^*$. If $[D:F]=n^2$, then $$\deg_F(D)=\deg_F(N)=\mathrm{ldeg}_K(D)=\mathrm{ldeg}_K(N)=n.$$ In particular, if $N$ is left algebraic of bounded degree $d$ over $K$, then $[D:F]\le d^2$.
\end{lemma}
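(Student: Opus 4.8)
The plan is to prove each of the four quantities equals $n$ by trapping it between $n$ and $n$, and then to read off the last assertion. For the upper bounds: given $\alpha\in D$, the subfield $F(\alpha)$ sits inside some maximal subfield $K'$ of $D$, so $[F(\alpha):F]$ divides $[K':F]=n$ and hence $\deg_F(\alpha)\le n$; and since $[K:F]=n$ with $F$ central, the tower formula gives $\dim_K({}_KD)\cdot[K:F]=\dim_F D=n^2$, so $\dim_K({}_KD)=n$, whence $\{1,\alpha,\dots,\alpha^n\}$ is left dependent over $K$ for every $\alpha\in D$ and $\mathrm{ldeg}_K(\alpha)\le n$ by Lemma~\ref{lem:5.1}. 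As $N\subseteq D$, all four numbers are $\le n$.

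For the lower bounds I would invoke Theorem~\ref{t3.3}: there are $a\in N$ and $b\in D^*$ with $L:=F(\gamma)$ a maximal subfield of $D$, where $\gamma:=aba^{-1}b^{-1}=a\,(ba^{-1}b^{-1})$ lies in $N$ since $N$ is normal in $D^*$. Then $\deg_F(\gamma)=[F(\gamma):F]=n$ by \cite[Theorem (15.8)]{Bo_La_91}, so $\deg_F(N)=\deg_F(D)=n$. The only delicate point is $\mathrm{ldeg}_K(N)\ge n$, that is, exhibiting a member of $N$ left algebraic over $K$ of degree exactly $n$. The element $\gamma$ itself need not do: over the possibly noncentral field $K$ the left span of the powers of an element can drop below the degree of its minimal polynomial over $F$ (e.g.\ when $F(\gamma)$ is specially placed relative to $K$), so I would replace $\gamma$ by a conjugate. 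Note first that $F$ must be infinite — otherwise the centrally finite $D$ is a finite division ring, hence commutative, forcing $N\subseteq F$, contrary to hypothesis. Now for $x\in D^*$ set $\gamma_x:=x\gamma x^{-1}\in N$; right multiplication by $x^{-1}$ is a bijective left-$K$-linear endomorphism of $D$, so by Lemma~\ref{lem:5.1} the left $K$-span of $\{\gamma_x^{\,i}\}_{i\ge0}$ is $(KxL)x^{-1}$, and therefore $\mathrm{ldeg}_K(\gamma_x)=\tfrac1n\dim_F(KxL)$, which equals $n$ precisely when the $F$-linear map $\mu_x\colon K\otimes_F L\to D$, $k\otimes l\mapsto kxl$, is surjective (equivalently bijective, both sides having $F$-dimension $n^2$).

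Hence everything comes down to producing one $x\in D^*$ with $KxL=D$. The set of $x\in D$ for which $\mu_x$ is not bijective is the vanishing locus of a single polynomial — a determinant depending $F$-linearly on the coordinates of $x$ — on $D\cong\mathbb{A}^{n^2}_F$, hence Zariski closed; it is \emph{proper} because, after extending scalars to an algebraic closure $\bar F$, one has $D\otimes_F\bar F\cong\M_n(\bar F)$ while $\bar K:=K\otimes_F\bar F$ and $\bar L:=L\otimes_F\bar F$ are self-centralizing commutative subalgebras of $\M_n(\bar F)$ of dimension $n$, for which $\bar F^{\,n}$ is a cyclic $\bar K$-module and $(\bar F^{\,n})^{*}$ a cyclic $\bar L$-module; picking a cyclic vector $v$ and a cyclic covector $\eta$, the rank-one matrix $\bar x=v\eta$ satisfies $\bar K\,\bar x\,\bar L=\M_n(\bar F)$. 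Since $F$ is infinite, the proper Zariski-open complement has an $F$-point $x$, and then $KxL=D$; thus $\mathrm{ldeg}_K(N)\ge n$, and together with the upper bounds all four equalities follow. The final claim is then immediate: if $N$ is left algebraic of bounded degree $d$ over $K$, then $n=\mathrm{ldeg}_K(N)\le d$, so $[D:F]=n^2\le d^2$. I expect the whole difficulty to be concentrated in this last lower bound — coping with the left/right asymmetry of algebraicity over a non-central subfield — with the reduction to the purely linear-algebraic statement ``$KxL=D$ for some $x$'' and its genericity proof (in particular the cyclicity input over $\bar F$, where one must handle a possibly inseparable $K$) being the crux; everything else is dimension counting together with Theorem~\ref{t3.3}.
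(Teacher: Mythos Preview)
Your argument is correct, and for the upper bounds and for $\deg_F(N)=n$ it matches the paper. The route you take for the crucial lower bound $\mathrm{ldeg}_K(N)\ge n$, however, is genuinely different.

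Both proofs ultimately reduce to producing $u\in D^*$ with $KuL=D$ (where $L=F(\gamma)$, $\gamma=aba^{-1}b^{-1}\in N$), and then observe that $u\gamma u^{-1}\in N$ has left $K$-degree $n$. You obtain such $u$ by a Zariski-density argument: the locus $\{x:\mu_x\text{ singular}\}$ is closed and, after base change to $\bar F$, seen to be proper via cyclicity of $\bar F^{\,n}$ over $\bar K$ and of $(\bar F^{\,n})^*$ over $\bar L$; infiniteness of $F$ then yields an $F$-point in the complement. The paper instead makes $D$ into a left $K[t]$-module via $t\cdot v=v\gamma$, applies the structure theorem for finitely generated modules over a PID to choose $u$ whose annihilator equals the annihilator of all of $D$, and then uses that $D$ is a \emph{faithful} simple left $(D\otimes_F L)$-module (via $(d\otimes\ell)\cdot v=dv\ell$, \cite[Theorem~(15.3)]{Bo_La_91}) to conclude that $\{u,u\gamma,\dots,u\gamma^{n-1}\}$ is left $K$-independent: a relation $\sum k_iu\gamma^i=0$ forces $\sum k_iv\gamma^i=0$ for every $v$, hence $\sum k_i\otimes\gamma^i=0$ in $D\otimes_F L$, hence all $k_i=0$.

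The paper's approach is more self-contained: it stays inside $D$, needs no base change, and sidesteps the cyclicity issue entirely, so inseparable $K$ causes no extra work. Your approach is conceptually clean but, as you rightly flag, the cyclicity of $\bar F^{\,n}$ over $\bar K$ is \emph{not} a formal consequence of ``self-centralizing of dimension $n$'' --- for instance $kI+ke_{12}+ke_{13}\subset\M_3(k)$ is self-centralizing of dimension $3$ yet $k^3$ is not cyclic over it. What makes it work here is that $D$ is free of rank $n$ over $K$, so $(\bar F^{\,n})^n\cong\M_n(\bar F)\cong\bar K^{\,n}$ as left $\bar K$-modules, and Krull--Schmidt over the Artinian ring $\bar K$ then gives $\bar F^{\,n}\cong\bar K$; the same applies to $\bar L$ on the right. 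With that justification supplied, your argument is complete.
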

\begin{proof} Assume that $[D:F]=n^2$. For any $a\in D$, let $L$ be a maximal subfield  of $D$ containing $F(a)$. By \cite[Theorem (15.8)]{Bo_La_91}, $\dim_FL=n$, which implies that  $\deg_F(a)\le n$. Hence, $\deg_F(D)\le n$. By Theorem~\ref{t3.3}, there exist $a\in N$ and $b\in D^*$ such that $F(aba^{-1}b^{-1})$ is a maximal subfield of $D$, so $\dim_FF(aba^{-1}b^{-1})=n$. Therefore,  $n\le \deg_F(N)$ because $aba^{-1}b^{-1}\in N$. Thus, $$n\le \deg_F(N)\le \deg_F(D)\le n,$$ which implies that $\deg_F(D)=\deg_F(N)=n$.
	
	It is clear that $\mathrm{ldeg}_K(N)\le \mathrm{ldeg}_K(D)\le \deg_F(D)=n$. To  conclude that $\mathrm{ldeg}_K(N)=\mathrm{ldeg}_K(D)=n$, it suffices to show $n\le \mathrm{ldeg}_K(N)$. Consider $a\in N$ and $b\in D^*$ such that $L=F(\alpha)$ is a maximal subfield of $D$, where $\alpha=aba^{-1}b^{-1}$. Then, $\{1, \alpha, \dots, \alpha^{n-1} \}$ is a basis of $L$ over $F$. Let $f: D\to D$ be a map defined by $f(v)=v\alpha$ for every $v\in D$. Then, $f\in \End(_KD)$. 	Let $K[t]$ be the polynomial ring over $K$. For every $P(t)=k_0+k_1t+\dots+k_mt^m\in K[t]$ and $v\in D$, we define $$P(t).v=k_0\mathrm{Id}(v)+k_1f(v)+\dots+k_mf^m(v).$$ One may show that with this left action, $D$ is a left $K[t]$-module. Observe that if $p_\alpha(t)=a_0+a_1t+\dots+a_{n-1}t^{n-1}+t^{n}$ is the minimal polynomial of $\alpha$ over $F$, then 
	$$p_\alpha(t).v = a_0\mathrm{Id}(v)+a_1f(v)+\dots+a_{n-1}f^{n-1}(v)+f^{n}(v)$$
	
	\hspace*{2.8cm}$=a_0v+a_1v\alpha+\dots+a_{n-1}v\alpha^{n-1}+v\alpha^n$
	
	\hspace*{2.8cm}$=v(a_0+a_1\alpha+\dots+a_{n-1}\alpha^{n-1}+\alpha^n)=0$\\ 
	for every $v\in D$. It implies that $D$ is a periodic left $K[t]$-module. Moreover, as $D$ is finitely generated over $K$, so is $D$ over $K[t]$. Hence, $D$ is periodic and finitely generated left $K[t]$-module. By the fundamental theorem for finitely generated modules over a principal ideal domain, there exists an element  $u\in D^*$ such that if $P(t).u=0$, then $P(t)v=0$ for every $v\in D$. We claim that $\{u, u\alpha,\dots,u\alpha^{n-1} \}$ is left independent over
	$K$.  Assume that there exist $k_0,k_1,\dots, k_{n-1}\in K$ not all are zeros such that $$k_0 u+k_1u\alpha+\dots+ k_{n-1}u\alpha^{n-1}=0.$$ Then, $$(k_0 \mathrm{Id}+k_1f+\dots+ k_{n-1}f^{n-1})(u)=0.$$ By the definition of $u$, one has 
	$$(k_0 \mathrm{Id}+k_1f+\dots+ k_{n-1}f^{n-1})(v)=0$$ for every $v\in D$.	On the other hand, by \cite[Theorem (15.3)]{Bo_La_91}, $D$ is a faithful simple left module over $R=D\otimes _FL$ with the left action of $R$ on $D$ given by $$(d\otimes \ell) \circ v=dv\ell$$ for every $d\otimes \ell\in R$ and $v\in D$. Then, 
	$$ \left(\sum_{i=0}^{n-1} k_i\otimes \alpha^i\right)(v)=\sum_{i=0}^{n-1} k_i v\alpha^{i}=\sum_{i=0}^{n-1} k_i f^i(v)=0$$ for every $v\in D$.
 Since $D$ is faithful, one has $\sum_{i=0}^{n-1} k_i\otimes \alpha^i=0$. It implies that $k_0=k_1=\dots=k_{n-1}=0$, a contradiction, and the claim is shown. Hence, $\{u, u\alpha,\dots,u\alpha^{n-1} \}$ is left independent over $K$, so is $\{1, u\alpha u^{-1},\dots,u\alpha^{n-1}u^{-1} \}$. Using Lemma~\ref{lem:5.1}, we conclude that $u\alpha u^{-1}$ is left algebraic of degree $\ge n$ over $K$. Since $u\alpha u^{-1}\in N$, we have  $n\le \mathrm{ldeg}_K(N)$ as desired. Hence, $$\deg_F(D)=\deg_F(N)=\mathrm{ldeg}_K(D)=\mathrm{ldeg}_K(N)=n.$$
Finally, if $N$ is left algebraic of bounded degree $d$ over $K$ then $$\deg_F(N)=\mathrm{ldeg}_K(N)=n\le d.$$ By Theorem \ref{c4.3}, $[D:F]\le n^2\le d^2$, and the proof is complete.
\end{proof}

Let  $X=\{x_1,x_2,\dots,x_m\}$ be $m$ noncommuting indeterminates and $M$ the free monoid generated by $X$. Every element $w=x_{i_1}\dots x_{i_r}$ of $M$ is called a \textit{word} on $X$ and $r$ is called the \textit{length} of $w$ which is denoted by $\ell(w)$. Recall that $M$ is a totally ordered monoid with the \textit{degree lexicographic order} $x_1>x_2>\cdots>x_m$ which is defined as the following: for $u, v\in M, u<v$ in case $\ell(u)<\ell(v)$; otherwise $u, v$ are compared according to lexicographic order.  It is trivial that $M$ is a multiplicative subsemigroup in the free algebra $F\langle X\rangle $ in $X$ over some field $F$.  Before establishing  the main result of this section, for the convenience of use, we state the following lemma whose proof follows easily from \cite[Theorem 2.4]{Pa_BeDrSh_13}.
	
	\begin{lemma}\label{lemma Bell}
		Let $M$, $m$ and $X$ be as above and assume that $d$ is a positive integer. Then, there exists a natural number $n=n(m,d)$ depending on $m$ and $d$ satisfying the following condition:
		
		If $w$ is a word in $M$ of length $\ell(w)>n$, then 
		\begin{enumerate}
			\item either $w=v_1u^dv_2$,  where $v_1,u,v_2\in M$ and $u$ is non-trivial, or
			\item $w=v_1u_1u_2\dots u_dv_2$, where  $v_1,v_2,u,u_1,u_2,\dots,u_d\in M$ satisfying the condition that  $u_1u_2\dots u_d>u_{\sigma(1)}u_{\sigma(2)}\dots u_{\sigma(d)}$ for any non-trivial permutations $\sigma\in S_d$ of $\{1,2,\dots,d\}$ and $(d-1)\ell (u_i)<\ell(u_1u_2\dots u_d)$ for any $i\in \{1,2,\dots,d\}$. \hspace*{8.25cm} $\square$
		\end{enumerate}
	\end{lemma}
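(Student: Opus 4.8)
The plan is to obtain the statement directly from the Shirshov-type dichotomy \cite[Theorem~2.4]{Pa_BeDrSh_13}. That result says, in substance, that for each pair $(m,d)$ there is a bound $n(m,d)$ such that every word $w$ over the $m$-letter alphabet $X$ with $\ell(w)>n(m,d)$ either has a factor of the form $u^d$ with $u$ non-trivial, or can be written as $w=v_1u_1u_2\cdots u_dv_2$ with $u_1,\dots,u_d$ consecutive non-trivial factors that are strictly descending in the ambient word order and whose lengths are comparable (each bounded by the others up to a constant depending only on $d$). In the first case we are immediately in alternative~(1) of the lemma, with $v_1,v_2$ the prefix and suffix of $w$ flanking the occurrence of $u^d$; so the only work is to check that, in the second case, the ``descending plus length-balanced'' configuration delivered by \cite{Pa_BeDrSh_13} implies the two requirements of alternative~(2): that $u_1u_2\cdots u_d>u_{\sigma(1)}u_{\sigma(2)}\cdots u_{\sigma(d)}$ for every non-identity $\sigma\in S_d$, and that $(d-1)\ell(u_i)<\ell(u_1u_2\cdots u_d)$ for each $i$.

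The first step I would take is to reduce to the situation where all of $u_1,\dots,u_d$ have a common length $\ell$. If \cite[Theorem~2.4]{Pa_BeDrSh_13} is not already stated with equal-length blocks, one replaces each $u_i$ by a suitable length-$\ell$ factor ($\ell$ being the smallest of the $\ell(u_i)$) and absorbs the uniformly bounded leftover letters into the adjacent blocks and into $v_1,v_2$; since the lexicographic comparison of two equal-length words is settled inside a common initial segment, truncating each $u_i$ from the correct side preserves the descent, and one checks that the $u_i$ remain consecutive inside $w$. Once the $u_i$ all have length $\ell$, the degree-lexicographic order restricted to length-$\ell$ words is plain lexicographic order, so $u_1>u_2>\cdots>u_d$ there. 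Given a non-identity $\sigma\in S_d$, let $k$ be the least index with $\sigma(k)\ne k$; since $\sigma$ fixes $1,\dots,k-1$ we get $\sigma(k)>k$, hence $u_k>u_{\sigma(k)}$. Comparing $u_1\cdots u_d$ with $u_{\sigma(1)}\cdots u_{\sigma(d)}$ one block at a time, the first $k-1$ blocks coincide and at the $k$-th block the letter of $u_k$ at the first position where $u_k$ and $u_{\sigma(k)}$ differ exceeds the corresponding letter of $u_{\sigma(k)}$; as the two concatenations have the same total length $d\ell$, this gives $u_1\cdots u_d>u_{\sigma(1)}\cdots u_{\sigma(d)}$ in the degree-lexicographic order. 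The length bound is then automatic: $(d-1)\ell(u_i)=(d-1)\ell<d\ell=\ell(u_1\cdots u_d)$.

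The point demanding the most care is the \emph{matching of order conventions}. Shirshov-type statements usually order words so that a word is larger than each of its proper prefixes, whereas the degree-lexicographic order used here makes shorter words smaller; these disagree in general. Passing to blocks of a common length is exactly what neutralizes this, since on words of a fixed length every such convention collapses to ordinary lexicographic order --- so the real content of the argument is to carry out that reduction while keeping the $u_i$ consecutive in $w$ and preserving the strict descent. A fully self-contained alternative would be to reprove the dichotomy by a Shirshov-style induction (a $d$-th-power-free word over a bounded alphabet is forced to become $d$-decomposable once it is long enough), but this merely redoes \cite[Theorem~2.4]{Pa_BeDrSh_13} and is considerably more work.
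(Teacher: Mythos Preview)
Your approach matches the paper's: the paper does not supply an independent proof but simply records that the lemma follows easily from \cite[Theorem~2.4]{Pa_BeDrSh_13} and places a $\square$. Your elaboration---reducing to equal-length blocks $u_1>\cdots>u_d$ and then reading off both the permutation inequality and the length bound $(d-1)\ell<d\ell$---is the natural unpacking of that citation; note that the cited theorem already delivers blocks of a common length, so your hedged ``reduction to equal lengths'' step (the only place your sketch is not fully justified) is in fact unnecessary.
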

	
	\begin{theorem} \label{left algebraic}
		Let $D$ be a division ring with center $F$, and $K$ a subfield of $D$. Assume that $N$ is a noncentral normal subgroup of $D^*$. If $N$ is left algebraic of bounded degree $d$ over $K$, then $[D:F]\leq d^2$.
	\end{theorem}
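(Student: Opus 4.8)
The plan is to reduce everything to Lemma~\ref{l5.2}. First, since $N$ is normal in $D^{*}$, conjugation by any $x\in D^{*}$ is a ring automorphism of $D$ carrying $N$ onto itself, hence carrying the division subring $\Delta$ generated by $N$ onto itself; by the Cartan--Brauer--Hua theorem a conjugation-invariant division subring of $D$ is either contained in $F$ or equal to $D$, and since $N\subseteq\Delta$ is noncentral we get $\Delta=D$. Thus $D$ is generated, as a division ring, by $N$, and every element of $N$ --- in particular every product of elements of $N$, and, by normality, every conjugate $xax^{-1}$ with $a\in N$ --- is left algebraic over $K$ of degree $\le d$. Next, every subfield of $D$ is contained in a maximal subfield of $D$ (Zorn's Lemma), and enlarging the coefficient field cannot increase left algebraic degrees, so we may replace $K$ by a maximal subfield of $D$ containing it; in particular we may assume $F\subseteq K$. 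Granting for the moment that $D$ is centrally finite, Lemma~\ref{l5.2} then applies verbatim (with this maximal $K$) and gives $[D:F]\le d^{2}$, which is the assertion. So the entire problem comes down to proving that $D$ is centrally finite.

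To prove $D$ centrally finite I would follow the method of \cite{Pa_BeDrSh_13}, with the subgroup $N$ in the role that $D$ plays there. The feature we use is that $N$ is closed under multiplication and inversion: any word in elements of $N$, and (by normality) any conjugate of such a word, again lies in $N$ and is therefore left algebraic over $K$ of degree $\le d$, so its large powers, and more generally long products, collapse modulo lower-degree terms via left minimal polynomials. Lemma~\ref{lemma Bell} is the combinatorial device that makes this collapsing systematic: a word of length greater than $n(m,d)$ either contains a $d$-th power $u^{d}$, or factors as $v_{1}u_{1}\cdots u_{d}v_{2}$ with $u_{1}\cdots u_{d}$ strictly dominant among its rearrangements and each $u_{i}$ short relative to $u_{1}\cdots u_{d}$; in either case one produces a $K$-linear relation that rewrites the word in terms of shorter ones. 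Running these rewritings exactly as in the proof of \cite[Theorem 1.3]{Pa_BeDrSh_13} shows that the $F$-subalgebra generated by $N$ is algebraic over $F$, and more precisely that every element of $N$ is algebraic over $F$ of degree bounded by some $n=n(m,d)$. Once that is known, Lemma~\ref{l2.2} gives $g_{n}(a,b_{1},\dots,b_{n})=0$ for all $a\in N$ and all $b_{i}\in D$, so $g_{n}=0$ is a generalized Laurent identity of $N$; then, just as in the proof of Theorem~\ref{c4.3}, \cite{Pa_Ch_96} forces $D$ to be centrally finite. (If $F$ were finite this would make $D$ finite, hence a field by Wedderburn's little theorem, contradicting the noncentrality of $N$, so that situation does not arise and requires no separate treatment.)

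The step I expect to be the main obstacle is the combinatorial one in the previous paragraph. The delicate points are that left algebraicity is guaranteed only for elements of $N$, not for arbitrary elements of $D$, so one must keep verifying that all the products and conjugates produced during the rewriting remain inside $N$; and that $K$ is not central, so reducing a factor $v_{1}u^{d}v_{2}$ places the $K$-coefficients in the middle of a word rather than at its left end, and controlling the resulting terms well enough that the induction on word length terminates is precisely the purpose of Lemma~\ref{lemma Bell}. Everything else --- the reduction to ``$D$ centrally finite'', the enlargement of $K$ to a maximal subfield, and the final invocation of Lemma~\ref{l5.2} --- is routine given the results already proved in the paper.
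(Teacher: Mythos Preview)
Your overall architecture is right --- run the Bell--Drensky--Sharifi rewriting on words in elements of $N$, conclude that $D$ is centrally finite, then finish with Lemma~\ref{l5.2} over a maximal subfield --- and it matches the paper's. You also correctly sense that normality is what rescues the combinatorics; for the record, the paper's device is to use the left minimal polynomial of $pqp^{-1}$ (not of $q$), so that after right-multiplying the resulting relation by $pr$ the $K$-coefficients sit on the far left. That conjugation trick, rather than Lemma~\ref{lemma Bell} itself, is what solves the ``coefficients in the middle'' problem.

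The genuine gap is your bridge from the combinatorics to central finiteness. What the rewriting actually gives is that for each finite set $a_{1},\dots,a_{m}\in N$ the ring $L=K[a_{1},\dots,a_{m}]$ has $\dim({}_{K}L)<\infty$. It does \emph{not} show that elements of $N$ are algebraic over $F$, let alone of uniformly bounded degree: $K$ and $Z(L)$ may both be transcendental over $F$, so no $F$-algebraicity follows; and your proposed bound $n(m,d)$ still carries the dangling parameter $m$, so it could not yield a single generalized Laurent identity valid on all of $N$. Hence neither Lemma~\ref{l2.2} nor \cite{Pa_Ch_96} is available at that stage.

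The missing idea --- and this is exactly what the paper supplies --- is a second, \emph{local} use of Lemma~\ref{l5.2}. Each $L$ is a division ring (Lemma~\ref{lem:5.2}); since $\dim({}_{K}L)<\infty$ it embeds in a matrix ring over $K$, hence is PI, hence centrally finite. The subgroup $N_{1}=N\cap L$ is noncentral normal in $L^{*}$ and is still left algebraic over $K$ of bounded degree $d$, so Lemma~\ref{l5.2} applied to $L$ (with $K$ enlarged to a maximal subfield of $L$) gives $[L:Z(L)]\le d^{2}$. Thus every such $L$ satisfies the single polynomial identity $g_{d}=0$, with the fixed exponent $d$ independent of $m$; consequently $F[N]$ satisfies $g_{d}=0$, and then \cite[Theorem~4.6.1]{Bo_BeMaMi_96} together with Kaplansky's theorem makes $F(N)=D$ centrally finite. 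Only after this does the final, global invocation of Lemma~\ref{l5.2} produce $[D:F]\le d^{2}$. In short: you need Lemma~\ref{l5.2} twice, once locally on the auxiliary $L$'s to extract a uniform PI, and once globally on $D$.
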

	\begin{proof} For a positive integer $m$, let $X=\{x_1,x_2,\dots,x_m\}$ be the set of $m$ noncommuting indeterminates and $M$ the free monoid generated by $X$ with the degree lexicographic order $x_1>x_2>\cdots>x_m$. For a word $w=w(x_1,x_2,\dots,x_m)\in M$ and $a_1,a_2,\dots,a_m\in D^*$, the notation $w(a_1, a_2,\dots, a_m)$ denotes the element of $D^*$ in which every $x_i$ is replaced by $a_i$ in $w$. Let $n=n(m,d)$ be an integer depending on $m$ and $d$ as it was described  in Lemma \ref{lemma Bell}. We claim that for any word $w(x_1,x_2,\dots,x_m)\in M$ and $m$ elements $a_1,a_2,\dots,a_m\in N$, there exist $s$ words $q_1(x_1,x_2,\dots,x_m)$, $q_2(x_1,x_2,\dots,x_m)$, $\dots,q_s(x_1,x_2,\dots,x_m)\in M$ with $\ell(q_i)\leq n$ for $1\leq i\leq s$ such that 
		$$w(a_1,a_2,\dots,a_m)\in\sum_{i=1}^{s}Kq_i(a_1,a_2,\dots,a_m).$$
Assume by contrary that there exists a word in $M$ and $a_1,a_2,\dots,a_m\in N$ which does not satisfy the claim. Put $S=\{ w\in M \mid w \text{ does not sastisfy the claim} \}$.
Let $w=w(x_1,x_2,\dots,x_m)$ be the minimal element of $S$. It is clear that the claim holds for any $v(x_1,x_2,\dots,x_m)\in M$ with $\ell(v)\leq n$, so $\ell(w)>n$. Now, for any positive integer $s$ and $q_1(x_1,x_2,\dots,x_m)$, $q_2(x_1,x_2,\dots,x_m)$, $\dots,$ $q_s(x_1,x_2,\dots,x_m)\in M$ with $\ell(q_i)\leq n$, one has
		\begin{align} \label{e2}
		w(a_1,a_2,\dots,a_m)\notin\sum_{i=1}^{s}Kq_i(a_1,a_2,\dots,a_m).
		\end{align}
According to Lemma \ref{lemma Bell}, there are two cases for $w$ to examine.
\bigskip

\textit{Case 1.}  $w=v_1u^dv_2$. Consider the following elements from $N$: $$p=v_1(a_1,a_2,\dots,a_m), q=u(a_1,a_2,\dots,a_m), r=v_2(a_1,a_2,\dots,a_m).$$ 
Assume that $f(t)=t^k+\alpha_{k-1}t^{k-1}+\dots+\alpha_1t+\alpha_0\in K[t]$ is the left minimal polynomial of the   element $pqp^{-1}$ over $K$. Then, we have 
  \begin{align} \label{e3} 
pq^kp^{-1}+\alpha_{k-1}(pq^{k-1}p^{-1})+\alpha_{k-2}(pq^{k-2}p^{-1})+\cdots+\alpha_1(pqp^{-1})+\alpha_0=0.   \end{align}
Note that $k\le d$ because $N$ is left algebraic of bounded degree $d$ over $K$. Hence, 
by multiplying both sides of (3) with $pq^{d-k}p^{-1}$ in case $k<d$, we would always suppose that  the following relation holds in general 
   \begin{align} \label{e4} 
pq^dp^{-1}+\alpha_{d-1}(pq^{d-1}p^{-1})+\alpha_{d-2}(pq^{d-2}p^{-1})+\cdots+\alpha_1(pqp^{-1})+\alpha_0=0.   \end{align}
Of course, if  $k<d$, then in (4), we must have $\alpha_0=\alpha_1=\dots=\alpha_{d-k-1}=0$. 
In view of (4), we can write
\begin{align*} 
		w(a_1,a_2,\dots,a_m)= &pq^dr=(pq^dp^{-1})pr\\
		= &-\alpha_{d-1}(pq^{d-1}r)-\alpha_{d-2}(pq^{d-2}r)-\cdots-\alpha_1(pqr)-\alpha_0(pr).
\end{align*} 
Hence, if we set $w_i=v_1u^{i-1}v_2$ for $1\le i\le d$, then  	$$w(a_1,a_2,\dots,a_m)\in \sum_{i=1}^{d}Kw_i(a_1,a_2,\dots,a_m).$$ 
\noindent
Since $w$ satisfies (\ref{e2}), there exists at least one index $i\in\{1, 2, \dots, d\}$ such that $w_i$ satisfies (\ref{e2}), which  contradicts the choice of $w$ because $w_i<w$ for every $1\le i\le d$.

\bigskip
\textit{Case 2.}  $w=v_1u_1u_2\dots u_dv_2$, where $u_1u_2\dots u_d>u_{\sigma(1)}u_{\sigma(2)}\dots u_{\sigma(d)}$ for any non-trivial permutations $\sigma\in S_d$,  and $(d-1)\ell(u_i)<\ell(u_1u_2\dots u_d)$ for any $i\in \{1,2,\dots,d\}$. For a non-empty subset $T$ of $\{1,2,\dots,d\}$, put $$p=v_1(a_1,a_2,\dots,a_m), u_T=\sum_{i\in T}u_i, q_T=u_T(a_1,a_2,\dots,a_m), r=v_2(a_1,a_2,\dots,a_m).$$
The condition $(d-1)\ell(u_i)<\ell(u_1u_2\dots u_d)$ for any $i\in \{1,2,\dots,d\}$ implies that  $\ell(u_{i_1}\dots u_{i_k})<\ell(u_1u_2\dots u_d)$ for any $k<d$  and $i_j\in \{1,2,\dots,d\}$. Consequently, the length of any terms of $v_1u_T^kv_2$ with $k<n$ is smaller than $\ell(w)$.	

Observe that $pq_Tp^{-1}$ is left algebraic of bounded degree $d$ over $K$ for every non-empty set $T$ of $\{1,2,\dots,d\}$. Hence, by the same argument as in the proof of Case 1, we would conclude that
\begin{align}\label{e5}
pq_T^dr=\sum_{i=1}^{d}\beta_iw_i(a_1,a_2,\dots,a_m)  
\end{align}
for some  $\beta_i\in K$, and some $w_i\in M$ with $\ell(w_i)<\ell(w), 1\le i\le d$.
The calculation shows that the following formulae holds: 
$$u_1u_2\dots u_d=\sum\limits_{\emptyset \ne T\subseteq \{1,2,\dots,d\} }(-1)^{d-|T|}u_T^d-\sum_{\sigma\in S_d,\sigma\neq Id}u_{\sigma(1)}\dots u_{\sigma(d)}.$$
It implies that
$$w=v_1u_1\dots u_dv_2=\sum_T(-1)^{d-|T|}v_1u_T^dv_2-\sum_{\sigma\in S_d,\sigma\neq Id}v_1u_{\sigma(1)}\dots u_{\sigma(d)}v_2.$$	
Since $w$ satisfies (\ref{e2}), there exists at least one term on the right side satisfying (\ref{e2}). By (\ref{e5}), all terms of the first summation do not satisfy (\ref{e2}), which implies that there exists $Id\ne\sigma\in S_n$ such that the summand $v_1u_{\sigma(1)}\dots u_{\sigma(d)}v_2$ of the second summation satifies (\ref{e2}). But this is impossible because $v_1u_{\sigma(1)}\dots u_{\sigma(d)}v_2<w$ and $w$ is the minimal element of $S$. Thus, the claim is shown.
 
Our second claim is that the subring $F[N]$ of $D$ generated by $N$ over $F$ satisfies the identity $g_d(x,y_1,y_2,\dots,y_d)=0$, where $g_d$ is defined in  (\ref{e1}). Indeed, for any $c,c_1,c_2,\dots,c_d\in F[N]$, there exist elements $a_1,a_2,\dots,a_{m'}\in N$ such that $c,c_1,c_2,\dots,c_d\in F[a_1,a_2,\dots,a_{m'}]$. If $\{a_1,a_2,\dots,a_{m'}\}$ is commuting set; that is, $a_ia_j=a_ja_i$ for every $1\le i,j\le m'$, then $\{c,c_1,c_2,\dots,c_d\}$ is also commuting set, which implies trivially that $g_d(c,c_1,c_2,\dots,c_d)=0$. Now, assume that $\{a_1,a_2,\dots,a_{m'}\}$ is not commuting. Put $L=K[a_1,a_2,\dots,a_{m'}]$ is the subring of $D$ generated by $\{a_1,a_2,\dots,a_m'\}$ over $K$. 		
Then,
$$L\subseteq\sum_{w\in M}Kw(a_1,a_2,\dots,a_{m'}).$$
By the first claim, if $V=\sum_{w\in M}Kw(a_1,a_2,\dots,a_{m'})$, then $\dim(_KV)<\infty$ as a left vector $K$-space. Hence, $\dim(_KL)<\infty$, and in view of  Lemma~\ref{lem:5.2}, $L$ is a division ring. Put $N_1=N\cap L$. Then, $N_1$ is noncentral (because $N$ contains $a_1,a_2,\dots,a_{m'}$) and left algebraic of bounded degree $d$ over $K$. From Lemma~\ref{l5.2}, we conclude that $\dim_{Z(L)}L\leq d^2$, which implies that $L$ satisfies the polynomial identity  $g_d(x,y_1,\dots,y_d)=0$. Hence, $F[a_1,\dots,a_m]$ also satisfies the polynomial identity $g_d(x,y_1,\dots,y_d)=0$. In particular, $g_d(c,c_1,\dots,c_d)=0$.
Thus, the second claim is shown; that is, $g_d(x,y_1,\dots,y_d)=0$ is a polynomial identity of $F[N]$. Using \cite[Theorem 4.6.1]{Bo_BeMaMi_96}, we   conclude that the division subring  $F(N)$ of $D$  generated by $N$ over $F$ satisfies a polynomial identity. As a result, $F(N)$ is finite dimensional over its center (e.g., see  \cite[Theorem 5]{Pa_Ma_69}). Moreover, it is well known that every division ring is generated by any noncentral normal subgroup (see \cite[14.3.8]{Bo_Sc_87} for a general result on noncentral subnormal subgroups); that is, $F(N)=D$. Thus, $D$ is centrally finite. Again, using Lemma~\ref{l5.2}, one has $[D:F]\leq d^2$, as  desired. 
\end{proof}

\textbf{Acknowledgements.}
The authors would like to express their sincere gratitude to the editor and 
the referee for their comments and suggestions.


\begin{thebibliography}{99}
\bibitem{Pa_AaAkBi_18} M. Aaghabali, S. Akbari, M. H. Bien, Division Algebras with Left Algebraic Commutators, \textit{Algebr. Represent. Theor.} \textbf{21} (2018) 807--816.
	
\bibitem{Pa_AaBi_19} M. Aaghabali, M. H. Bien, Certain Simple Maximal Subfields in Division Rings,  \textit{Czechoslovak Math. J.} \textbf{69} (2019), no. 4, 1053--1060.
	
\bibitem{Pa_AkArMe_98} S. Akbari, M. Arian-Nejad and M. L. Mehrabadi, On additive commutator groups in division rings, \textit{Results Math.} \textbf{33}  (1998), 9--21.
		
\bibitem{Pa_BeRo_14} J. P. Bell, D. Rogalski, Free subalgebras of division algebras over uncountable fields, \textit{Math. Z.} \textbf{277} (2014), 591--609.
			
\bibitem{Pa_BeDrSh_13} J. P. Bell, V. Drensky, Y. Sharifi, Shirshov's theorem and division rings that are left algebraic over a subfield, \textit{J. Pure Appl. Algebra} \textbf{217} (2013), 1605--1610.
			
\bibitem{Bo_BeMaMi_96} K.I. Beidar, W.S. Martindale and A.V. Mikhalev, \textit{Rings with Generalized Identities}, Marcel Dekker, Inc., (New York-Basel-Hong Kong, 1996).
			
\bibitem{Pa_Bi_16} M. H. Bien, Subnormal subgroups in division rings with generalized power central group identities, \textit{Arch. Math.} (Basel) \textbf{106} (2016), no. 4, 315--321.
			
\bibitem{Pa_BiDu_14} M. H. Bien, D. H. Dung, On normal subgroups of division rings which are radical over a proper division subring, \textit{Studia Sci. Math. Hungar.} \textbf{51} (2014), no. 2, 231--242.

\bibitem{Bo_BeOg_13} G. Berhuy, F.Oggier, \textit{An Introduction to Central Simple Algebras and Their Applications to Wireless Communication} Mathematical Surveys and Monographs \textbf{191}, (AMS, 2013).
			
\bibitem{Pa_ChFoLe_04} M. A. Chebotar,  Y. Fong and P.-H. Lee, On division rings with algebraic commutators of bounded degree, \textit{Manuscripta Math.} \textbf{113} (2004), 153--164.
			
\bibitem{Pa_Ch_96} K. Chiba, Generalized rational identities of subnormal subgroups of skew fields, \textit{Proc. Amer. Math. Soc.} \textbf{124} (1996), 1649--1653.

\bibitem{Pa_DeBiHa_19} T. T. Deo, M. H. Bien and B. X. Hai, On weakly locally finite division rings, \textit{Acta Math.
Vietnam.} 44 (2019), 553--569.
			
\bibitem{Pa_Di_86} O. Di Vincenzo, A result on derivations with algebraic values, \textit{Canadian Math. Bull.}
\textbf{29} (1986),  432--437. 

\bibitem{Pa_Fa_60} C. Faith, Algebraic division ring extensions, \textit{Proc. Amer. Math. Soc.} \textbf{11} (1960), 43--53.

\bibitem{Pa_GoMa_86} J. Z. Goncalves and A. Mandel, Are there free groups in division rings? \textit{Israel J. Math.} \textbf{53}
(1986), 69--80.

\bibitem{Pa_GoSh_08} J. Z. Goncalves and M. Shirvani, Algebraic elements as free factors in simple Artinian rings,
\textit{Contemp. Math.} \textbf{499} (2008), 121--125.
					
\bibitem{Pa_HaDuBi_17} B. X. Hai, T. H. Dung and M. H. Bien,  Almost subnormal subgroups in division rings with generalized algebraic rational identities, (2017) \textit{arXiv:1709.04774}.

\bibitem{Pa_HaKhBi_2020} B. X. Hai, H. V. Khanh and M. H. Bien Generalized power central group identities in almost subnormal subgroups of $\GL_n(D)$, \textit{Algebra i Analiz} \textbf{31} (2019), No. 4,  225--239, English tranl. in \textit{St. Petersburg Math. J.} \textbf{31} (2020), No. 4, 739--749.
			
\bibitem{Pa_HaTrBi_19} B. X. Hai, V. M. Trang and M. H. Bien, A note on subgroups in a division ring that are left algebraic over a division subring, \textit{Arch. Math.} \textbf{113} (2019), 141--148.
			
\bibitem{Pa_He_82} I. N. Herstein, Derivations of prime rings having power central values, \textit{Contemp. Math.} \textbf{13}  (1982), 163--171.
			
\bibitem{Pa_He_80} I. N. Herstein, Multiplicative commutators in division rings II, \textit{Rend. Circ. Mat. Palermo II} \textbf{29} (1980),  485--489.
			
\bibitem{Pa_He_78} I. N. Herstein,  Multiplicative commutators in division rings, \textit{Israel J. Math.} \textbf{31} (1978), 180--188.
			
\bibitem{Pa_HePrSc_75} I. N. Herstein, C. Procesi and M. Schacher, Algebraic valued functions of noncommutative rings, \textit{J. Algebra} 36, 128--150 (1975).
			
\bibitem{Bo_HoJo_85} R. A. Horn and  C. R. Johnson, \textit{Matrix Analysis}, Cambridge University Press.( Cambridge, UK, 1985).
			
\bibitem{Bo_Ja_64} N. Jacobson, \textit{Structure of rings}, American Mathematical Society, Colloquium Publications, Vol. 37. 190 Hope Street (Providence, 1956). 
			
\bibitem{Pa_Ja_45} N. Jacobson, Structure theory for algebraic algebras of bounded degree, \textit{Ann. of Math.} \textbf{46} (1945), 695--707.
			
\bibitem{Bo_La_91} T.Y.Lam, \textit{A first course in noncommutative rings,} GTM \textbf{131} ( Springer-Vetlag, Berlin, 1991).
					
\bibitem{Pa_Ma_00} M. Mahdavi-Hezavehi, Commutators in division rings revisited, \textit{Iranian Math. Soc.
Bull.} \textbf{26} (2000), 7--88.
					
\bibitem{Pa_MaAkMeHa_95} M. Mahdavi-Hezavehi, S. Akbari-Feyzaabaadi, M. Mehraabaadi and H. Hajie-Abolhassan, On derived groups of Division rings II, \textit{Comm. Algebra} \textbf{23} (1995), 2881--2887.

\bibitem{Pa_MaMa_85} L. Makar-Limanov and P. Malcolmson, Words periodic over the center of a division rings,
\textit{Proc. Amer. Math. Soc.} \textbf{51}, (1985) 590--592.

\bibitem{Pa_Ma_69} W. S. Martindale 3rd,  Prime rings satisfying a generalized polynomial identity, \textit{J. Algebra} \textbf{12} (1969), 576--584.
					
\bibitem{susan_algebraic} S. Montgomery, {Algebraic algebra with involution}, \textit{Proc. Amer. Math. Soc.} \textbf{31} (1972), 368--372.

					
\bibitem{Bo_morandi_96} P. Morandi, \textit{Fields and Galois Theory}, GTM \textbf{167} (Spriger-Verlag, 1996).						
				
\bibitem{Pa_Ro_84} J. D. Rosen,  Generalized Rational Identities and Rings with Involution, \textit{J. Algebra} \textbf{89} (1984), 416--436.

\bibitem{Bo_Rowen_80} L. H. Rowen, \textit{Polynomial identities in ring theory}, Academic Press, Inc. (New York, 1980).
				
\bibitem{Bo_Sc_87} W. R. Scott, \textit{Group theory}, second edition, Dover Publications Inc. (New York,  1987).
					
\bibitem{Pa_Sl_64} M. Slater, On simple rings satisfying a type of “restructed” polynomial identity, {\it J. Algebra} {\bf 1} (1964), 347--354.
						
\bibitem{Pa_St_64} C. J. Stuth, A generalization of the Cartan-Brauer-Hua Theorem, \textit{Proc. Amer. Math. Soc.}
\textbf{15} (2) (1964) 211--217.
	
\end{thebibliography}
\end{document}